\documentclass{amsart}
\usepackage[utf8]{inputenc}
\usepackage{multirow}
\usepackage{amssymb,bm,amsfonts, stmaryrd, amsmath, amsthm, enumerate, mathtools,comment}
\usepackage[all]{xy}
\usepackage{csquotes}
\makeatletter
\@namedef{subjclassname@2020}{\textup{2020} Mathematics Subject Classification}
\makeatother
\usepackage[colorlinks]{hyperref}
\hypersetup{
 colorlinks=true,
 linkcolor=blue,
 filecolor=magenta, 
 urlcolor=cyan,
}
\usepackage{wrapfig,tikz, tikz-cd}
\usetikzlibrary{arrows}
\usepackage[capitalise]{cleveref}
\crefformat{equation}{(#2#1#3)}
\crefrangeformat{equation}{(#3#1#4--#5#2#6)}
\crefformat{enumi}{(#2#1#3)}
\crefrangeformat{enumi}{(#3#1#4--#5#2#6)}
\newtheorem{theorem}{Theorem}[section]
\newtheorem{lemma}[theorem]{Lemma}
\newtheorem{proposition}[theorem]{Proposition}
\newtheorem{corollary}[theorem]{Corollary}
\newcounter{intro}

\newtheorem{introthm}[intro]{Theorem}

\newtheorem{introdef}[intro]{Definition}

\theoremstyle{definition}

\newtheorem{example}[theorem]{Example}
\newtheorem{remark}[theorem]{Remark}

\newtheorem{chunk}[theorem]{}

\newtheorem*{theorem*}{Theorem}
\newtheorem*{definition*}{Definition}
\newtheorem*{lemma*}{Lemma}
\newtheorem*{proposition*}{Proposition}
\newtheorem*{corollary*}{Corollary}

\theoremstyle{definition}

\newcommand{\pdim}{{\operatorname{pdim}}}

\newcommand{\Ext}{{\operatorname{Ext}}}
\newcommand{\Tor}{{\operatorname{Tor}}}

\newcommand{\del}{\partial}
\renewcommand{\H}{\operatorname{H}}

\DeclareMathOperator{\coker}{coker}
\DeclareMathOperator{\depth}{depth}

\newcommand{\V}{{\rm{V}}}

\newcommand{\ann}{{\operatorname{ann}}}

\DeclareMathOperator{\height}{height}

\newcommand{\m}{\mathfrak{m}}

\newcommand{\h}{\mathsf{h}}

\DeclareMathOperator{\Supp}{Supp}

\DeclareMathOperator{\Kos}{Kos}
\DeclareMathOperator{\Fitt}{Fitt}

\newcommand{\ma}{\mathfrak a}

\newcommand{\ldepth}[2]{\operatorname{lift.depth}_{#1}(#2)}
\newcommand{\ldim}[2]{\operatorname{lift.dim}_{#1}(#2)}

\title{Lifting systems for finite length modules}

\author[B. Katz]{Benjamin Katz}
\address{University of Nebraska Lincon, NE 68588. U.S.A.}
\email{bkatz2@huskers.unl.edu}
\urladdr{https://math.unl.edu/person/ben-katz/}

\author[N.~KC] {Nawaj KC}
\address{University of Utah, Salt Lake City, UT 84112. U.S.A.}
\email{nawaj.kc@utah.edu}
\urladdr{https://sites.google.com/view/kcnawaj/home}

\author[K. Mohana Sundaram]{Kesavan Mohana Sundaram}
\address{University of Nebraska Lincon, NE 68588. U.S.A.}
\email{km2@huskers.unl.edu}
\urladdr{https://kesavan-ms.github.io/}

\author[A. Soto Levins]{Andrew J. Soto Levins}
\address{Texas Tech University, TX 79409. U.S.A.}
\email{ansotole@ttu.edu}
\urladdr{https://sites.google.com/view/andrewjsotolevins}

\author[R. Watson]{Ryan Watson}
\address{University of Nebraska Lincon, NE 68588. U.S.A.}
\email{rwatson9@huskers.unl.edu}
\urladdr{https://rawatson1997.github.io}

\date{\today}
\keywords{lifting systems, liftable depth, liftable dimension, finite length modules, liftability, Serre liftability}
\subjclass[2020]{Primary: 13C14, 13C15, 13D22  Secondary: 13D40, 13E15}

\begin{document}

\begin{abstract}
    This paper is concerned with lifting modules along a surjective map of noetherian local rings, say $\varphi \colon R \twoheadrightarrow S$. A finitely generated $R$-module $L$ is a \textit{naive lift} of an $S$-module $M$ if $L \otimes_R S \cong M$. We are concerned with the maximum depth and dimension among all naive lifts of $M$, which we call the \textit{liftable depth} and \textit{liftable dimension}, respectively, of $M$ along $\varphi$. We approach this via a notion of \textit{lifting systems} that we introduce in this paper. We then provide a necessary and sufficient condition for a module of finite length to lift and Serre lift to a regular local ring in terms of lifting systems. 
\end{abstract}
\maketitle
\section{Introduction}

Given a surjection of noetherian local rings $\varphi\colon R \twoheadrightarrow S$ and a finitely generated $S$-module $M$, there exists a finitely generated $R$-module $L$ such that $L \otimes_RS\cong M$; for  example,  we can lift a presentation matrix of $M$ to $R$ and take $L$ to be its cokernel. We call such an $L$ a \textit{naive lift} of $M$. Since there are choices on how we can lift a presentation, an $S$-module may admit many non-isomorphic naive lifts. The problem is to determine how large the depth and dimension of a naive lift of a given $S$-module can be.

That is, we consider two invariants of an $S$-module $M$, with respect to $\varphi$, called \textit{liftable depth} and \textit{liftable dimension}, denoted as $\ldepth{\varphi}{M}$ and $\ldim{\varphi}{M}$. These are defined to be the maximum depth and dimension among all naive lifts of $M$ respectively. For the remainder of this section, we assume that $S$ has finite projective dimension over $R$ and that $M$ is a finite length $S$-module.

 By a formula of Auslander \cite[Theorem 1.2]{Auslander:1961}, it follows that \begin{equation} \label{ldepthbound}
     \ldepth{\varphi}{M} \leq \depth R - \depth S, 
 \end{equation}and a module that achieves this upper bound admits a \textit{lift} to $R$; see \cref{Auslander}. That is, there exists a finitely generated $R$-module $L$ such that $L \otimes_R S \cong M$ and $\Tor^R_i(L, S) = 0$ for $i >0$. In this case, we say $M$ is a \textit{liftable} $S$-module to $R$. Note that when $M$ lifts to some $R$-module $L$, the minimal free resolution of $M$ lifts to the minimal free resolution of $L$. This notion of lifting has been studied in a number of works \cite{Auslander/Ding/Solberg:1993, Buchsbaum/Eisenbud:1972, Dao:2007, Hochster:1975, Jorgensen:1999, Jorgensen:2003, Peskine/Szpiro:1973}.

On the other hand, it seems to be a more difficult problem to determine a uniform upper bound for the liftable dimension, at least at this level of generality. Conjecturally, the answer is 
\begin{equation} \label{ldimbound}
    \ldim{\varphi}{M} \leq \dim R - \dim S,
\end{equation}
which is a consequence of a long standing conjecture of Peskine--Szpiro \cite[Ch.II, 0.2]{Peskine/Szpiro:1973}. The dimension inequality of Serre \cite[Ch.V, Thm.3]{Serre} establishes \cref{ldimbound} when $R$ is regular and the New Intersection theorem \cite{RobertsNIT:1987} establishes it when we assume that $S$ is also Cohen--Macaulay. It is also known in the case when $R$ is a hypersurface due to Hochster \cite{Hochster:1981}. However, beyond these cases, we do not know if \cref{ldimbound} is true. 

By definition, an $S$-module $M$ for which $\ldim{\varphi}{M}$ achieves this conjectural upper bound is said to be \textit{Serre liftable} to $R$; this is a new notion of lifting introduced and studied in \cite{KC;2025, KC/SotoLevins:2024}. This family of modules enjoys some remarkable properties: for example, if a finite length module $M$ over $S$ admits a Serre lift to some unramified regular local ring $R$, then $\ell_S(M) \geq e(S)$, where $e(S)$ denotes the Hilbert--Samuel multiplicity of $S$ \cite[Theorem 2.1]{KC/SotoLevins:2024}.

The connection between classical liftability and Serre liftability lies in the following curious fact: when $R$ is regular or when we are in the standard graded setting with $M$ of finite projective dimension, if $\ldepth{\varphi}{M}$ achieves the maximum possible value in \cref{ldepthbound}, then so does $\ldim{\varphi}{M}$ in \cref{ldimbound}; that is, liftable modules to a regular ring are Serre liftable \cite[Proposition 1.6]{KC/SotoLevins:2024} and likewise in the graded setting \cite[Thm 2.2.8]{KC;2025}. We do not know to what extent the regularity or graded hypothesis is needed; see \cite[Question 2.2.12]{KC;2025}. But there exist unliftable modules, of finite and infinite projective dimension, that admit Serre lifts \cite[Example 1.7 \& 3.2]{KC/SotoLevins:2024} and \cref{unliftablemodules}.

\subsection*{New results}
In this paper we establish lower bounds on liftable depth and liftable dimension using a notion of \textit{lifting systems} for finite length modules. We draw our inspiration from a deformation-theoretic idea of lifting modules used by Auslander--Ding--Solberg in characterizing \textit{liftable modules} along a hypersurface surjection \cite[Theorem 1.2]{Auslander/Ding/Solberg:1993}. The key new idea is the following definition.

\begin{introdef} \label{theDefinition}
    Given a surjection of noetherian local rings $\varphi\colon R \twoheadrightarrow S=R/\ma$ and a finite length $S$-module $M$, a collection of finite length $R$-modules $\{M_n\}_{n\geq 1}$ is a \textit{lifting system} for $M$ along $\varphi$ if  $M_1 = M$, $\ma^n \subseteq \ann_RM_n$, and $M_{n+1}/\ma^nM_{n+1} \cong M_n$ for all $n \geq 1$. The $R$-module $L = \varprojlim M_n$ is called the \textit{lift} associated to the system.
\end{introdef}

This terminology comes from the fact that if $R$ is $\ma$-adically complete and $\{M_n\}_{n \geq 1}$ is a lifting system for $M$ along $\varphi$ with the associated lift $L$, we have that $L \otimes_R S \cong M$. That is, $L$ is a naive lift of the $S$-module $M$; see \cref{liftingprop}. Our main results give lower bounds for depth and dimension of $L$ in terms of invariants depending on the finite length modules in the system $\{M_n\}_{n \geq 1}$.

In the following result, when we write $\ell_R(R/I_n) \approx n^r$, we mean that there are polynomials $f$ and $g$ of degree $r$ such that
\[f(n) \leq \ell_R(R/I_n) \leq g(n)\]
for $n\gg 0$.

\begin{introthm}\label{theTheorem}

    Consider a surjection $\varphi\colon R \twoheadrightarrow S = R/\ma$ of noetherian local rings where $R$ is $\ma$-adically complete and let $r$ and $d$ be nonnegative integers. Then the following hold.
    \begin{enumerate}
    \item If $S$ has finite projective dimension over $R$, then a finite length $S$-module $M$ satisfies $\ldepth{\varphi}{M} \geq r$ if and only if $M$ admits a lifting system $\{ M_n \}_{n \geq 1}$ along $\varphi$ such that $\varprojlim _{n}\Tor^R_i(M_n, S) = 0$ for all $i > \pdim_RS - r$. In this case, $L = \varprojlim M_n$ is a naive lift such that $\depth L \geq r$. 
    \item A finite length $S$-module $M$ satisfies $\ldim{\varphi}{M} \geq r$ if and only if it admits a lifting system $\{ M_n\}_{n \geq 1}$ along $\varphi$ such that $\ell_R(R/I_n) \approx n^d$, where $I_n = \Fitt^R_0(M_n)$ is the zeroth Fitting ideal of $M_n$ over $R$ and $d\geq r$. In this case, $L = \varprojlim M_n$ is a naive lift such that $\dim L \geq r$. Moreover, if $\dim{L}=r$, then $d=r$.
\end{enumerate}
\end{introthm}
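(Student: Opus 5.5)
The plan is to go through the dictionary between naive lifts and lifting systems: if $L$ is a naive lift of $M$, then $M_n := L/\ma^n L$ is a lifting system, and conversely (by \cref{liftingprop}, using $\ma$-adic completeness of $R$) the inverse limit $L=\varprojlim M_n$ of a lifting system is a finitely generated naive lift with $L/\ma^nL\cong M_n$. With this correspondence, both parts become statements comparing invariants of $L$ with invariants of the truncations $L/\ma^nL$.

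For (1), set $p=\pdim_R S$. By the Auslander formula behind \cref{ldepthbound}, $\depth L\ge r$ should be equivalent to $\Tor^R_i(L,S)=0$ for all $i>p-r$; this is the rigidity/depth formula for Tor against a module of finite projective dimension, as in \cref{Auslander}. The remaining task is to identify $\Tor^R_i(L,S)$ with $\varprojlim_n \Tor^R_i(L/\ma^nL,S)$. Take a finite free resolution $F$ of $S$. Then $F\otimes L/\ma^nL = (F\otimes L)/\ma^n(F\otimes L)$ is a tower of complexes with surjective transition maps, so the Mittag-Leffler condition holds on the chain level. The homology of $F\otimes L$ consists of finitely generated modules killed by $\ma$. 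By the Artin--Rees lemma, the system $\{\H_i(F\otimes L/\ma^n L)\}$ is then pro-isomorphic to $\{\H_i(F\otimes L)\otimes R/\ma^n\}$, and the latter is eventually constant, equal to $\Tor_i^R(L,S)$ since it is annihilated by $\ma$. This is the standard fact that completion is exact on finitely generated modules, and I will cite it in that form. Hence $\varprojlim_n \Tor_i(M_n,S)=\Tor_i(L,S)$. For a system coming from a naive lift, the vanishing condition is exactly depth $\ge r$. Conversely, for a given system, $L$ satisfies $\depth L\ge r$.

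For (2), take $I_n=\Fitt_0(M_n)$. Since $M_n=L/\ma^nL$ and Fitting ideals commute with base change, $I_n=\Fitt_0(L)+\ma^n$. Also $\sqrt{\Fitt_0(L)}=\sqrt{\ann L}$, and in fact $\ann(L)^{\mu}\subseteq \Fitt_0(L)\subseteq \ann L$. Put $J=\Fitt_0(L)$, so that $\dim R/J=\dim L$. Then $\ell(R/I_n)=\ell\big((R/J)/\ma^n(R/J)\big)$ is the Hilbert--Samuel function of the $\ma$-primary ideal $\ma(R/J)$ of $R/J$, because $\ma+J$ is $\m$-primary, as $M$ has finite length. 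This is a polynomial of degree $\dim R/J=\dim L$ for $n\gg0$. So for the system attached to $L$, we get $\ell(R/I_n)\approx n^{\dim L}$. The forward direction follows by taking a lift with $\dim L\ge r$ and $d=\dim L$. The converse and the ``moreover'' clause follow because any lifting system with $\ell(R/I_n)\approx n^d$ has $L/\ma^nL\cong M_n$, and hence $d=\dim L$. Thus $\dim L=d\ge r$, and if $\dim L=r$ then $d=r$.

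The main obstacle I expect is the Tor/inverse-limit interchange in (1). The plan there is to prove that the pro-systems $\{\Tor_i(L/\ma^nL,S)\}$ and $\{\Tor_i(L,S)\}$ are isomorphic via Artin--Rees applied to the cycles of $F\otimes L$. A secondary point is checking that \cref{liftingprop} really gives $L/\ma^nL\cong M_n$, and not only $L\otimes S\cong M$. I would verify this using finite generation of $L$ over the complete ring, together with surjectivity of the transition maps.
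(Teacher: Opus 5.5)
\textbf{Part (2): a false identity that needs repair.} Part (2) of your proposal rests on a false identity. You assert $I_n=\Fitt^R_0(L)+\ma^n$ because Fitting ideals commute with base change. But base change only computes the Fitting ideal of $M_n=L/\ma^nL$ over $R/\ma^n$, namely $\Fitt^{R/\ma^n}_0(M_n)=(\Fitt^R_0(L)+\ma^n)/\ma^n$.

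The statement's $I_n$ is $\Fitt^R_0(M_n)$, computed over $R$ from a presentation $(\phi\mid A_n^{\oplus\mu})$ in which every generator carries the relations from $\ma^n$. Expanding the maximal minors gives $\Fitt^R_0(M_n)=\sum_{j\ge 0}\ma^{nj}\Fitt^R_j(L)$, which is usually strictly smaller. For example, take $R=k[[x]]$, $\ma=(x)$ and $L=R^{\oplus 2}$. Then $I_n=(x^{2n})$, whereas $\Fitt^R_0(L)+\ma^n=(x^n)$. So $\ell_R(R/I_n)=2n$ is not the Hilbert--Samuel function of $\ma(R/J)$.

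What does hold is the sandwich
\[
J+\ma^{n\mu}\subseteq I_n\subseteq J+\ma^{n},\qquad J=\Fitt^R_0(L),\ \mu=\mu_R(L).
\]
This is exactly \cref{fittingLemma}; the left inclusion uses $\Fitt^R_\mu(L)=R$. It yields
\[
\ell_R(R/(J+\ma^n))\le\ell_R(R/I_n)\le\ell_R(R/(J+\ma^{\mu n})).
\]
For $n\gg0$ both bounds are polynomials in $n$ of degree $\dim R/J=\dim L$; the upper one is $P(\mu n)$, with $P$ the Hilbert--Samuel polynomial of $\ma(R/J)$. This is how the paper obtains $\ell_R(R/I_n)\approx n^{\dim L}$ via \cref{lemma}. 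Once you replace your equality by this two-sided estimate, your forward direction, converse, and ``moreover'' clause go through as written.

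\textbf{Part (1): correct, by a different route.} Part (1) is correct, but your proof of $\varprojlim_n\Tor^R_i(M_n,S)\cong\Tor^R_i(L,S)$ differs from the paper's.

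The paper applies the $\varprojlim^1$ exact sequence for towers of complexes (see \cref{invlimitiso}) to the degreewise surjective tower $M_n\otimes_RF$. It uses $\varprojlim_n(M_n\otimes_RF)\cong(\varprojlim_n M_n)\otimes_RF$, since $F$ is degreewise finite free. It kills the $\varprojlim^1$ term because each $\Tor^R_i(M_n,S)$ has finite length. This works for an arbitrary lifting system directly.

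Your Artin--Rees argument instead shows that $\{\H_i(C)/\ma^n\H_i(C)\}_n\to\{\H_i(C/\ma^nC)\}_n$ is a pro-isomorphism for $C=F\otimes_RL$. Pro-injectivity needs Artin--Rees for the cycles $Z_i\subseteq C_i$. Pro-surjectivity needs it for the boundaries $B_{i-1}\subseteq C_{i-1}$ as well. Citing only exactness of completion would not suffice, since that computes $\H_i(\varprojlim_n C/\ma^nC)$ rather than $\varprojlim_n\H_i(C/\ma^nC)$.

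Your route also requires \cref{liftingprop} to identify a given system with $\{L/\ma^nL\}$ compatibly with the transition maps. This holds, because the isomorphisms are induced by the projections $L\to M_n$. In exchange, your route uses neither the finite length of the Tor modules nor the completeness of $R$ in the forward direction.
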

 
As corollaries to this theorem, we characterize when a finite length module over a local ring is liftable and when it is Serre liftable to a regular local ring; see \cref{cor1} and \cref{cor2}. Utilizing the methods introduced in this paper, in \Cref{S_Examples} we produce unliftable modules of finite length and finite projective dimension that are Serre liftable and modules that intersect properly; see \Cref{E_UnliftableSerreLiftbleExample} and \Cref{T_IntersectProperlyEquimultiple} respectively. 

\section{Lifting systems}
In this section we establish properties of lifting systems, our main result in this direction is the following. We write $\mu_{R}(-)$ for the minimal number of generators of a finitely generated $R$-module.
\begin{proposition} \label{liftingprop}
    Let $\varphi\colon R \twoheadrightarrow S = R/\ma$ be a surjection of noetherian local rings where $R$ is $\ma$-adically complete and let $M$ be a finite length $S$-module. Assume that $\{M_n\}_{n\geq 1}$ is a lifting system for $M = M_1$ along $\varphi$, and let $L = \varprojlim M_n$ denote the associated \textit{lift} of this system; see \Cref{theDefinition}. Then $L$ satisfies the following: \begin{enumerate}
        \item $L \otimes_R R/\ma^n \cong M_n$ for each $n \geq 1$, and 
        \item $L$ is a finitely generated $R$-module with $\mu_R(L) = \mu_R(M_n)$ for each $n \geq 1$  and 
        \[ \ann_R(L) = \bigcap_{n \geq 1}\ann_R(M_n).\]
    \end{enumerate}
\end{proposition}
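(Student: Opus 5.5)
The plan is to build a finitely generated $R$-module surjecting compatibly onto every $M_n$, and then to identify $L$ with it using completeness of $R$. First I record that the transition maps are surjective. Set $\pi_n\colon M_{n+1}\to M_{n+1}/\ma^nM_{n+1}\cong M_n$. Since $\ma^n\subseteq\ann_R M_n$, Nakayama applied to the finite length module $M_{n+1}$ gives $\mu_R(M_{n+1})=\mu_R(M_{n+1}/\ma^nM_{n+1})=\mu_R(M_n)$. So $\mu:=\mu_R(M_n)=\mu_R(M)$ is constant.

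Next I lift generators compatibly. Choose generators $x^{(1)}_1,\dots,x^{(1)}_\mu$ of $M_1$. Inductively, given generators $x^{(n)}_j$ of $M_n$, lift them along $\pi_n$ to elements $x^{(n+1)}_j\in M_{n+1}$. These generate $M_{n+1}$ modulo $\ma^nM_{n+1}\subseteq \m M_{n+1}$, so by Nakayama they generate $M_{n+1}$. This gives compatible surjections $R^\mu\twoheadrightarrow M_n$ with kernels $K_n\supseteq \ma^nR^\mu$ satisfying $K_n=K_{n+1}+\ma^nR^\mu$, since $M_{n+1}/\ma^nM_{n+1}\cong M_n$ via this map. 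Let $K=\bigcap_n K_n$ and set $N=R^\mu/K$.

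The key claim is that $K_n=K+\ma^nR^\mu$ for each $n$. I expect this to be the main obstacle, and it is where completeness enters. Given $y\in K_n$, I will write $y=y_n$ and inductively choose $y_{m+1}\in K_{m+1}$ with $y_{m+1}-y_m\in\ma^mR^\mu$, using $K_m=K_{m+1}+\ma^mR^\mu$. Since $R$, and hence $R^\mu$, is $\ma$-adically complete, the sequence $y_m$ converges to some $z$. The differences satisfy $z-y_n\in\ma^nR^\mu$, because $\ma^nR^\mu$ is closed, being an ideal in a noetherian complete ring. Moreover $z\in K_m$ for every $m$, since $z-y_m\in\ma^mR^\mu\subseteq K_m$. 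Hence $z\in K$ and $y\in K+\ma^nR^\mu$; the reverse inclusion is clear. It follows that $N/\ma^nN\cong R^\mu/K_n\cong M_n$, compatibly with the transition maps.

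Finally I identify $L$ with $N$ and read off the conclusions. $N$ is finitely generated over the complete ring $R$, so it is $\ma$-adically complete, and therefore $N\cong\varprojlim N/\ma^nN\cong\varprojlim M_n=L$. This proves (1) together with finite generation. For the generator count, $\mu_R(L)=\mu_R(L/\ma L)=\mu_R(M_1)=\mu$, by Nakayama and (1). For the annihilator: if $r\in\ann_R L$, then $r$ kills every $L\otimes R/\ma^n\cong M_n$. Conversely, if $r$ kills every $M_n$, then $rL\subseteq\bigcap_n\ma^nL=0$ by Krull's intersection theorem, since $\ma\subseteq\m$ and $L$ is finitely generated. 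This gives $\ann_R(L)=\bigcap_n\ann_R(M_n)$.
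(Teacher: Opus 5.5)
Your proof is correct, but it is organized differently from the paper's. The paper proves (1) first. It applies Mittag-Leffler exactness of inverse limits to the artinian systems built from $\alpha_n\colon M_n^{\oplus\mu(\mathfrak a)}\to M_n$ (multiplication by generators of $\mathfrak a$). It then identifies the induced map on limits with multiplication by those generators, which gives $L/\mathfrak aL\cong M$, and reruns the argument with $\mathfrak a^n$. Only afterwards does it prove finite generation: it lifts a surjection $R^t\to M$ through $L$, uses Nakayama and the snake lemma to show the kernel system is surjective, and applies Mittag-Leffler again to exhibit $L$ as a quotient of $\varprojlim R^t/\mathfrak a^nR^t\cong R^t$.

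You build the presentation first. Compatible generators give $R^\mu\twoheadrightarrow M_n$, and your convergence argument for $K_n=K+\mathfrak a^nR^\mu$ is a hands-on Mittag-Leffler argument for the surjective system $\{K_n/\mathfrak a^nR^\mu\}$. Together with completeness of the finitely generated module $N=R^\mu/K$, it replaces both of the paper's inverse-limit steps. It yields (1) for every $n$ and finite generation at once, and the explicit presentation $L\cong R^\mu/\bigcap_nK_n$ makes $\mu_R(L)=\mu$ transparent.

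The trade-off is that you use completeness of $R$ from the outset. The paper's proof of (1) never uses it, only the artinian property of the $M_n$, so there completeness enters solely for finite generation. Your annihilator argument via Krull's intersection theorem is fine; the paper instead uses $L\subseteq\prod_n M_n$ and surjectivity of $L\to M_n$.

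Two cosmetic points. First, $\mu_R(M_{n+1})=\mu_R(M_{n+1}/\mathfrak a^nM_{n+1})$ holds because $\mathfrak a^n\subseteq\mathfrak m$, not because $\mathfrak a^n$ kills $M_n$. Second, $\mathfrak a^nR^\mu$ is closed simply because it is open in the $\mathfrak a$-adic topology.
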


We prove this proposition in \cref{A_ProofLiftingSystemProp}. We now describe more explictly the presentation matrices of the modules in a lifting system; see \cref{thisiswhatitlookslike}. 
\begin{chunk} \label{R_LiftingSystemRemark} Suppose that $\varphi\colon R \twoheadrightarrow S = R/\ma$ is a surjection of noetherian local rings. For each $n \geq 1$, let $A_n$ denote the row matrix with the minimal set of generators of $\ma^n$ as its entries. And for $\mu \geq 1$, we write $A_n^{\oplus \mu}$ for the diagonal block matrix of size $\mu$ with $A_n$ on its diagonal. 

Let $M_1 = M$ be an $S$-module with a presentation matrix $\phi$ and suppose that $\mu_S(M) = \mu$. Equivalently, $\phi$ is a $\mu \times m$ matrix over $S$ for some $m \geq 0$. By abusing notation, we also view $\phi$ as a matrix over $R$ by naively lifting its entries. The following is a presentation of $M_1 = M$ as an $R$-module. 
\[
\phi_1 =
\left(
\begin{array}{c|c}
\phi & (A_1)^{\oplus \mu}
\end{array}
\right)
\]
If $M_2$ is an $R/\ma^2$-module such that $M_2/\ma M_{2} \cong M_1$, then we can write a presentation matrix of $M_2$ as an $R$-module as follows.
\[ \phi_2 = \left(
\begin{array}{c|c}
\phi + \sigma_1& (A_2)^{\oplus \mu}
\end{array}
\right),\]
where $\sigma_1$ is a matrix the same size as $\phi$ and its entries are either zero or in $\ma \setminus \ma^2$. Inductively, at the $n$'th stage, we can write a presentation matrix of $M_n$ as an $R$-module as follows 
\[ \phi_n = \left(\begin{array}{c|c}
\phi + \sum_{j=1}^{n-1}\sigma_i& (A_n)^{\oplus \mu}
\end{array}
\right),\] where for each $j$, $\sigma_j$ is a matrix the same size as $\phi$ and its entries are either zero or in $\ma^j \setminus \ma^{j+1}$. We record the following as a lemma for future use. 

\begin{lemma} \label{thisiswhatitlookslike}
    Suppose that $\varphi\colon R \twoheadrightarrow S = R/\ma$ is a surjection of noetherian local rings. If $M$ is an $S$-module with a presentation matrix $\phi$, then a lifting system $\{M_n\}_{n\geq 1}$ for $M$ along $\varphi$ satisfies: \begin{enumerate}
        \item For each $n \geq 1$, the following is a presentation matrix for $M_n$ as an $R$-module 
        \[ \phi_n = \left(
\begin{array}{c|c}
\phi + \sum_{j=1}^{n-1}\sigma_j& (A_n)^{\oplus \mu}
\end{array}
\right), \]
where $\mu = \mu_S(M)$, and for each $j \geq 1$, $\sigma_j$ is a matrix the same size as $\phi$ and its entries are either zero or in $\ma^j \setminus \ma^{j+1}$.
    \item If $\ma = (g_1, \ldots, g_d)$, then for each $j \geq 1$, we can write 
    \[
    \sigma_j = \sum_{k=1}^d \sigma_{j,k} g_k\] 
    where $\sigma_{j, k}$ is a matrix the same size as $\sigma_j$ with each entry in $\ma^{j-1}$ for all $j \geq 1$ and $1 \leq k \leq d$.
    \item The following matrix \[ \Phi \coloneqq \phi + \sum_{j=1}^{\infty
    } \sigma_j \] is well-defined over $\widehat{R}^{\mathfrak a}$ and $L= \coker \Phi \cong \varprojlim M_n$.
    \end{enumerate}
\end{lemma}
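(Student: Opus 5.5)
We argue by induction on $n$, constructing the matrices $\sigma_j$ one at a time. Each new $\sigma_j$ records how the presentation of $M_{j+1}$ differs from a lift of the presentation of $M_j$. Fix the presentation $\phi$ of $M=M_1$, viewed over $R$ by naively lifting entries, with $\mu=\mu_S(M)$ rows. First we note that $\mu_R(M_n)=\mu$ for every $n$. Since $\ma\subseteq\m$, we have $M_n/\m M_n \cong M_{n-1}/\m M_{n-1}\cong\cdots\cong M/\m M$. Hence a minimal generating set of $M_n$ has exactly $\mu$ elements.

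For (1), the base case is clear: $\coker\phi_1=\coker\phi\otimes_R R/\ma=M_1$. Suppose $M_n=\coker\phi_n$ with $\phi_n=(\Psi_n \mid A_n^{\oplus\mu})$, where $\Psi_n=\phi+\sum_{j<n}\sigma_j$. Choose a surjection $\pi\colon R^\mu\to M_{n+1}$. Composing with $M_{n+1}\to M_{n+1}/\ma^nM_{n+1}\cong M_n$ gives a surjection $R^\mu\to M_n$. By minimality, any two minimal presentations of $M_n$ on $R^\mu$ differ by an automorphism of $R^\mu$. So after changing the basis of $R^\mu$ (and replacing $\pi$ accordingly) we may assume this composite is the projection coming from $\phi_n$. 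The kernel of $R^\mu\to M_n$ is $\operatorname{im}\Psi_n+\ma^nR^\mu$, and it contains $K:=\ker\pi$ together with $\ma^nR^\mu$. Each column $c$ of $\Psi_n$ lies in $\ker(R^\mu\to M_n)=K+\ma^nR^\mu$, so we can write $c=k-s$ with $k\in K$ and $s\in\ma^nR^\mu$. Define $\sigma_n$ to be the matrix whose columns are these corrections $s$. Then $\Psi_{n+1}=\Psi_n+\sigma_n$ has its columns in $K$.

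It remains to check that $\operatorname{im}\Psi_{n+1}+\ma^{n+1}R^\mu=K$, using $\ma^{n+1}R^\mu\subseteq K$. The main obstacle is the inclusion $\supseteq$. One has $K+\ma^nR^\mu=\operatorname{im}\Psi_{n+1}+\ma^nR^\mu$, so $K=\operatorname{im}\Psi_{n+1}+(K\cap\ma^nR^\mu)$. We would like to replace $K\cap \ma^nR^\mu$ by $\ma^{n+1}R^\mu$, but this does not follow formally. We would therefore weaken the claim: allow $\sigma_n$ extra columns, or equivalently enlarge $\phi$ by zero columns, and add generators of $K\cap\ma^nR^\mu$ as new columns with entries in $\ma^n$. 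This is consistent with the phrase ``entries are zero or in $\ma^n\setminus\ma^{n+1}$'' once we pad $\phi$ by zero columns. A cleaner route uses the fact that $M_{n+1}$ is annihilated by $\ma^{n+1}$ only. Choose the presentation $\phi$ large enough at the outset, for example by taking enough columns. A priori this needs a uniform bound on the number of relations of the $M_n$, and that bound comes from Artin--Rees applied in the limit. We expect this bookkeeping to be the real difficulty. Finally, any entry of $\sigma_n$ lying in $\ma^{n+1}$ can be absorbed into the $A_{n+1}^{\oplus\mu}$ block and replaced by zero.

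For (2), we write each entry of $\sigma_j$, which lies in $\ma^j=\sum_k g_k\ma^{j-1}$, as $\sum_k g_k a_k$ with $a_k\in\ma^{j-1}$. We then collect the coefficients $a_k$ into the matrices $\sigma_{j,k}$. For (3), the partial sums $\Psi_n$ form an $\ma$-adic Cauchy sequence of matrices, because $\sigma_j$ has entries in $\ma^j$. Hence $\Phi$ is defined over $\widehat R^{\ma}$. Moreover $\Phi\equiv\Psi_n \pmod{\ma^n}$, so $\coker\Phi\otimes R/\ma^n=\coker\phi_n=M_n$. These isomorphisms are compatible with the transition maps, so we get a map $\coker\Phi\to\varprojlim M_n$. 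This map is an isomorphism because a finitely generated module over the complete ring $\widehat R^{\ma}$ is $\ma$-adically complete, so $\coker\Phi\cong\varprojlim_n \coker\Phi/\ma^n\coker\Phi$.
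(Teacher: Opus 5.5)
You have put your finger on a real problem in (1), but your proposal does not resolve it. In fact, with the lemma as literally stated, nothing can resolve it. The step that fails is the one you flag: $K=\operatorname{im}\Psi_{n+1}+(K\cap\ma^nR^\mu)$ does not force $K\cap\ma^nR^\mu\subseteq\operatorname{im}\Psi_{n+1}+\ma^{n+1}R^\mu$. With $\phi$ fixed before the lifting system, (1) is false. Take $R=k[[x,y]]$, $\ma=\m$ and $M=S=k$. Then $\mu=1$, and every presentation $\phi$ of $M$ over $S$ is a $1\times m$ zero matrix. For any $N>m$, the modules $M_n=R/\m^{\min(n,N)}$ form a lifting system for $M$. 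If (1) held, the entries $e_1,\dots,e_m$ of $\Psi_{N+1}$ would satisfy $(e_1,\dots,e_m)+\m^{N+1}=\m^N$. Their images would then span the $(N+1)$-dimensional space $\m^N/\m^{N+1}$, which is impossible. The paper's own proof of (1) is just the discussion preceding the lemma, which asserts the shape of $\phi_2$ without argument, so it has the same hole. What is true is the weakened form you suggest: $\phi$ padded by zero columns, with the number of columns depending on the system. Your argument for that version is still incomplete, on two counts.

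First, the uniform bound. Its natural source is not Artin--Rees but \cref{liftingprop}(2), applied over $\widehat R^{\ma}$. This is legitimate: $\ma^nM_n=0$ and $\widehat R^{\ma}/\ma^n\widehat R^{\ma}\cong R/\ma^n$, and the appendix proof does not use the present lemma, so there is no circularity. Suppose the relations of $L=\varprojlim M_n$ in $(\widehat R^{\ma})^{\mu}$ are generated by $m'$ elements. Then every $\ker(R^\mu\to M_n)/\ma^nR^\mu$ is generated by $m'$ elements.

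Second, the corrections $s$ cannot be chosen arbitrarily, even with enough columns. Take $R=k[[x]]$, $\ma=(x)$, $\phi=(0)$ and $M_n=R/(x)$ for all $n$. The choice $s=0$ gives $\coker\phi_2=R/(x^2)\ne M_2$. You must choose the $s$ so that the columns of $\Psi_{n+1}$ span $K/(\m K+\ma^{n+1}R^\mu)$. This is possible by linear algebra:
the admissible lifts of a column form a coset of $K\cap\ma^nR^\mu$; the initial lifts together with the image of $K\cap\ma^nR^\mu$ span that space; and its dimension is at most the number of columns once you have padded to at least $m'$ columns. Nakayama's lemma then gives $\operatorname{im}\Psi_{n+1}+\ma^{n+1}R^\mu=K$.

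A cleaner route avoids the induction altogether:
present $L$ over $\widehat R^{\ma}$ by $m'$ relations, compatibly with the generators underlying $\phi$; pad this relation matrix and $\phi$ with zero columns to a common size; use invertible column operations over $S$, which lift to $\widehat R^{\ma}$, to make the presentation of $L$ congruent to the padded $\phi$ modulo $\ma$; and expand the difference $\ma$-adically as $\sum_j\sigma_j$. This yields (1) and (3) at once. Your arguments for (2) and (3) are fine and agree with the paper's.
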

    \begin{proof}
        The discussion preceding this lemma shows (1). For (2), we note that each entry of $\sigma_j$ is either zero or in $\ma^j$, so we may write each of its entries as a sum $\sum_{k = 1}^d r_i g_i$ where each $r_i \in \ma^{j-1}.$ This suffices to show (2).

        For (3), clearly $L$ is a finitely generated $\widehat{R}^{\ma}$-module and thus $L$ is $\ma$-adically complete. Therefore, \[ L \cong \varprojlim L/\ma^n L \cong \varprojlim M_n,\] where the first isomorphism is because $L$ is $\ma$-adically complete and second isomorphism is because $L/\ma^n L \cong \coker(\phi_n) \cong M_n$. 
    \end{proof}
\end{chunk}

The following is an explicit example demonstrating the conclusion of this lemma. 
\begin{example} \label{workingexample}
      Consider $ \varphi\colon R \to S = R/\ma$ where $R = k[[x, y, z]]$ and $\ma = (x^2,y^2)$. Let $M_1 = M$ be the $S$-module that has the following as a presentation matrix  
    \[ \phi = \left(\begin{array}{cc}
         z & y  \\
         y & z
    \end{array} \right).\] 
    For each $j \geq 1$, set 
    \[\sigma_j = \left(\begin{array}{cc}
         0 & 0  \\
         x^{2j} & 0
    \end{array} \right) \text{ if }j\text{ is even, and }
    \sigma_j = \left(\begin{array}{cc}
         0 & x^{2j}  \\
         0 & 0
    \end{array} \right) \text{ if }j\text{ is odd.}
    \] 
    Clearly, for any $j \geq 1$, $\sigma_j$ has entries that either zero or in $\ma^j \setminus \ma^{j+1}$. Now as an $R$-module, $\phi_n = \left(\begin{array}{c|c}
\phi + \sum_{j=1}^{n-1}\sigma_j& (A_n)^{\oplus 2}
\end{array} \right)$ is a presentation matrix for $M_n$. Writing this explicitly, for all $n\geq 3$, when $n$ is even, $M_n$ is the cokernel of the map  \[ R^{2n + 4} \xrightarrow{\left[\begin{array}{cc|cccccccc}
        z & s& x^{2n} & x^{2n-2}y^2 & \cdots 
        & y^{2n} & 0 & 0 & \cdots 
        & 0 \\
       t & z & 0 & 0 & \cdots 
        & 0 & x^{2n} & x^{2n-2}y^2 & \cdots 
        & y^{2n} 
    \end{array}\right]} R^{2},\] where $s = y + x^2 + x^6 + \cdots + x^{2n-2}$ and $t = y + x^4 + x^8 + \cdots + x^{2n-4}$. Likewise, when $n$ is odd, $M_n$ is the cokernel of the map \[ R^{2n + 4} \xrightarrow{\left[\begin{array}{cc|cccccccc}
      z & s& x^{2n} & x^{2n-2}y^2 & \cdots 
        & y^{2n} & 0 & 0 & \cdots 
        & 0 \\
       t & z & 0 & 0 & \cdots 
        & 0 & x^{2n} & x^{2n-2}y^2 & \cdots 
        & y^{2n} 
    \end{array}\right]} R^{2},\] where $s = y + x^2 + x^6 + \cdots + x^{2n-4}$ and $t = y + x^4 + x^8 + \cdots + x^{2n-2}.$ Finally, the associated lift of the lifting system considered is the $R$-module $L$ which is isomorphic to the cokernel of \[ R^{2} \xrightarrow{\left[\begin{array}{cc}
        z & y + \sum_{i=1}^{\infty} x^{4i-2} \\
        y + \sum_{i=1}^{\infty} x^{4i} & z
    \end{array} \right]} R^2.\]
\end{example}

\section{Liftable depth}
In this section we prove \Cref{theTheorem}(1) and \Cref{cor1}. First we recall a formula of Auslander and a case for when Tor commutes with inverse limits. For the next two remarks, $\varphi\colon R\rightarrow S=R/\ma$ is a surjection of noetherian local rings.

\begin{chunk} \label{Auslander}
    Let $M$ and $N$ be finitely generated $R$-modules. If $M$ has finite projective dimension over $R$ and $M \otimes_RN$ has finite length, 
    Auslander's depth formula \cite[Theorem 1.2]{Auslander:1961} establishes 
    \[ \depth N = \pdim_R M - q_R(M, N),\]
    where $q_R(M, N) = \sup\{i : \Tor^R_{i}(M, N) \not= 0 \}.$ Therefore, if $S$ has finite projective dimension over $R$, then for any finite length $S$-module $M$ one has 
    \[ \ldepth{\varphi}{M}\leq\pdim_RS = \depth R - \depth S,\]
    and $\ldepth{\varphi}{M} = \pdim_RS$ if and only if $M$ lifts to $R$. 
\end{chunk}

\begin{chunk}
    Suppose that $\{M_n\}_{n\geq1}$ is a lifting system along $\varphi$ for some finite length $S$-module $M$. We can then consider the system of $R$-complexes $\{M_n \otimes_R F\}_{n\geq 1}$ where $F$ is a minimal free resolution of the $R$-module $S$. Since ${M_n}$ is a system of surjective maps, the induced map on complexes is degree wise surjective. Now applying \cite[Theorem 3.5.8]{Weibel:1994}, we see that \begin{equation} \label{invlimitiso}
        \varprojlim \Tor^R_i(M_n, S) \cong \Tor^R_i(\varprojlim M_n, S).
    \end{equation} 
    Here we have also used the fact that for all $i \geq 0$ one has $\varprojlim^{(1)} \Tor^R_i(M_n, S) = 0$ as the Tor groups are finite length modules; see \cite[Proposition 3.5.7]{Weibel:1994}. 
\end{chunk}

\begin{proof}[Proof of \cref{theTheorem}(1).]
    For $L$ a finitely generated naive lift of $M$ with $\depth L \geq r$, we apply Auslander's formula to $L \otimes_R S \cong M$ and observe that
    \[\pdim_R S - r\geq q_R(L, S).\]
    Clearly $\{M_n:=L/\ma^nL\}_{n \geq 1}$ is a lifting system of $M$ along $\varphi$, with associated lift $\widehat{L}^{\ma}$, the $\ma$-adic completion of $L$. Since $R$ is $\ma$-adically complete and since $L$ is a finitely generated $R$-module by \Cref{liftingprop}(2), we have $\widehat{L}^{\ma} \cong L$, and so
    \[\varprojlim \Tor^R_i(M_n, S) \cong \Tor^R_i(\varprojlim M_n, S) \cong \Tor^R_i(L, S) = 0\]
    for $i>q_{R}(L,S)$, where the first isomorphism is by \cref{invlimitiso}.

    Conversely, given a lifting system $\{M_n\}_{n \geq 1}$ such that $\varprojlim \Tor^R_i(M_n, S)=0$ for $i>\pdim_{R}S-r$, then $L = \varprojlim M_n$ satisfies $L \otimes_R S \cong M$ by \cref{liftingprop}(1) and it satisfies $\depth L \geq r$ since
    \[\pdim_{R}S - r \geq q_{R}(L,S) = \pdim_{R}S - \depth{L},\]
    where the inequality is by assumption and the equality is by Auslander's formula \Cref{Auslander}. We can apply Auslander's formula here because $L$ is a finitely generated $R$-module by \cref{liftingprop}(2). This completes the proof.
\end{proof}

\begin{corollary} \label{cor1}
    Let $\varphi\colon R \twoheadrightarrow S = R/\ma$ be a surjection of noetherian local rings where $R$ is regular and $\ma$-adically complete. A finite length $S$-module $M$ lifts to $R$ if and only if it admits a lifting system $\{M_n\}_{n\geq 1}$ along $\varphi$ such that $\varprojlim \Tor^R_1(M_n, S) = 0$. 
\end{corollary}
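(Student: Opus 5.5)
We will deduce \cref{cor1} from \cref{theTheorem}(1) by taking $r = \pdim_R S$. Since $R$ is regular, every finitely generated $R$-module has finite projective dimension; in particular $S$ does, so the hypotheses of \cref{theTheorem}(1) hold. By \cref{Auslander}, $M$ lifts to $R$ exactly when $\ldepth{\varphi}{M} = \pdim_R S$. The inequality $\ldepth{\varphi}{M}\le \pdim_R S$ always holds, so $M$ lifts if and only if $\ldepth{\varphi}{M} \geq \pdim_R S$. With $r=\pdim_R S$, \cref{theTheorem}(1) says this happens if and only if $M$ admits a lifting system $\{M_n\}_{n\geq1}$ with
\[
\varprojlim \Tor^R_i(M_n,S)=0 \quad \text{for all } i>0 .
\]
So the only remaining task is to show that the vanishing for $i=1$ already forces the vanishing for all $i\geq 1$.

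For the forward direction there is nothing to do: if $M$ lifts, \cref{theTheorem}(1) yields a system with vanishing limits for every $i>0$, in particular for $i=1$.

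For the converse, take a lifting system with $\varprojlim \Tor^R_1(M_n,S)=0$ and set $L=\varprojlim M_n$. By \cref{liftingprop}, $L$ is a finitely generated $R$-module with $L\otimes_R S\cong M$. By the isomorphism \cref{invlimitiso}, $\Tor^R_1(L,S)\cong \varprojlim \Tor^R_1(M_n,S)=0$. Now apply rigidity of Tor over regular local rings (Auslander--Lichtenbaum): since $R$ is regular, $\Tor^R_1(L,S)=0$ implies $\Tor^R_i(L,S)=0$ for all $i\ge1$. Hence $L$ is a lift of $M$, and $M$ lifts.

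The main obstacle is this rigidity step. It is not proved earlier in the paper, so I would cite the rigidity theorem for modules over regular local rings (Auslander in the unramified case, Lichtenbaum in general). If a more self-contained route is wanted, I would instead use Auslander's formula to reduce to showing $q_R(L,S)=0$ and then invoke rigidity there; either way, rigidity is the one external input. I would also check that $\ma$-adic completeness of $R$ is used only through \cref{liftingprop} and \cref{invlimitiso}, which are already available.
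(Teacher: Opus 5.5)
Your proposal is correct and follows the paper's proof. The forward direction is an application of \cref{theTheorem}(1): you take $r=\pdim_R S$ directly, while the paper takes $r=\depth L$, which equals $\pdim_R S$ by Auslander's formula. The converse is the paper's argument exactly: $\Tor^R_1(L,S)\cong\varprojlim\Tor^R_1(M_n,S)=0$ by \cref{invlimitiso}, then Lichtenbaum's rigidity theorem, with \cref{liftingprop} ensuring that $L$ is a finitely generated naive lift.
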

\begin{proof}
The $S$-module $M$ lifting to a finitely generated $R$-module $L$ gives the second equality below
\[\pdim_{R}S -\depth{L} = q_{R}(L,S) = 0,\]
and the first equality is by Auslander's formula. The liftable depth of $M$ is at least $\depth{L}$, hence by \Cref{theTheorem}(1) there is a lifting system $\{ M_n \}_{n\geq 1}$ such that 
\[ \varprojlim\Tor^R_i(M_n, S) = 0\] 
for all $i > \pdim_R S - \depth{L}=0$.

For the converse, let $L = \varprojlim M_n$ be the lift associated to the lifting system $\{M_n\}_{n\geq 1}$. Then the assumption gives the second equality below
\[\Tor_{1}^{R}(L,S) = \Tor_{1}^{R}(\varprojlim M_n,S) \cong \varprojlim \Tor^R_1(M_n, S) = 0\]
and \Cref{invlimitiso} gives the isomorphism. Modules over regular local rings are Tor rigid by \cite{Lichtenbaum:1966}, and so $q_{R}(L,S)=0$. Since we also have that $L$ is a naive lift of $M$ by \cref{liftingprop}(1), $L$ is a lift of $M$
\end{proof}

In the following example we compute the lifting depth of $M$ from \cref{workingexample}.
\begin{example} \label{workingexamplefordepth}
Retaining the setup of \cref{workingexample}, the complex 
\[0\rightarrow R^{2} \xrightarrow{\left[\begin{array}{cc}
        z & y + \sum_{i=1}^{\infty} x^{4i-2} \\
        y + \sum_{i=1}^{\infty} x^{4i} & z
    \end{array} \right]} R^2 \rightarrow 0\]
is a minimal free resolution of $L$ over $R$. This is because the determinant of the matrix above is a nonzerodivisor in $R$, and so the Buchsbaum-Eisenbud acyclicity criterion implies that the complex is exact, see \cite[Theorem 1.4.13]{Bruns/Herzog:1998}. This implies $\depth{L}=2$ by the Auslander-Buchsbaum formula. Now \cref{ldepthbound} gives the second inequality below
\[2 = \depth{L} \leq \ldepth{\varphi}{M} \leq \depth{R} - \depth{S} = 2.\]
\end{example}

\section{Liftable dimension}
In this section, we provide a proof for \cref{theTheorem}(2) and \cref{cor2}. Our key insight is looking at the zeroth Fitting ideals of the modules in the lifting system. 
\begin{chunk}\label{Fittingideals}
    For basic facts about Fitting ideals, we refer to \cite[20.2]{Eisenbud:1995}. Recall that if $R$ is a noetherian local ring and $M$ is a finitely presented $R$-module with a presentation 
    \[ R^{\beta_1} \xrightarrow{\phi} R^{\beta_0} \to M \to 0,\]
    one defines $\Fitt^R_i(M) = I_{\beta_0-i}(\phi)$, the ideal generated by the $(\beta_0-i)$-minors of the matrix $\phi$. Note that for an ideal $I$ in $R$, we have an equality: $\Fitt_{0}(R/I)=I$. The ideals $\Fitt^R_i(M)$ do not depend on the choice of presentation of $M$, see \cite[20.4]{Eisenbud:1995}. Furthermore, for a finitely generated $R$-module $M$, we have 
    \[ \ann_R(M)^{\mu_{R}(M)} \subseteq \Fitt^R_0(M) \subseteq \ann_R(M),\]
    whence $\Supp_R(M) = \V(\ann_R(M)) = \V(\Fitt^R_0(M))$, see \cite[Prop 20.7]{Eisenbud:1995}. 
\end{chunk}

\begin{lemma} \label{fittingLemma}
    Suppose that $R \twoheadrightarrow S = R/\ma$ is a surjection of noetherian local rings. If $L$ is a finitely generated $R$-module generated by $\mu$ elements, then \[\Fitt_0^R(L) + \ma^{\mu} \subseteq \Fitt^R_0(L \otimes_R S) \subseteq \Fitt^R_0(L) + \ma. \]
\end{lemma}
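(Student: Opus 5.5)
We will work with an explicit presentation and compare minors. Choose a minimal presentation $R^{\beta_1} \xrightarrow{\phi} R^{\mu} \to L \to 0$ with $\mu$ generators; since Fitting ideals do not depend on the presentation (\cref{Fittingideals}), we may compute with any one. Let $g_1,\dots,g_t$ generate $\ma$. Tensoring with $S$ and viewing $L\otimes_R S = L/\ma L$ as an $R$-module gives the $R$-presentation
\[ R^{\beta_1} \oplus R^{\mu t} \xrightarrow{\psi = \left(\begin{array}{c|c} \phi & (A_1)^{\oplus \mu}\end{array}\right)} R^{\mu} \to L/\ma L \to 0, \]
in the notation of \cref{R_LiftingSystemRemark}. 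Thus $\Fitt^R_0(L\otimes_R S) = I_\mu(\psi)$, the ideal of maximal minors of $\psi$. Both inclusions then come from expanding these $\mu\times\mu$ minors.

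\emph{Right inclusion.} Take a $\mu\times\mu$ minor of $\psi$. If all its columns come from $\phi$, it lies in $I_\mu(\phi)=\Fitt^R_0(L)$. Otherwise it uses at least one column of $(A_1)^{\oplus\mu}$. Expanding along that column, every term has a factor $g_k\in\ma$, so the minor lies in $\ma$. Hence $I_\mu(\psi)\subseteq \Fitt^R_0(L)+\ma$.

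\emph{Left inclusion.} First, $I_\mu(\phi)\subseteq I_\mu(\psi)$, since the columns of $\phi$ are among those of $\psi$. Next we show $\ma^\mu\subseteq I_\mu(\psi)$. A product $g_{k_1}\cdots g_{k_\mu}$ is the determinant of the submatrix of $(A_1)^{\oplus\mu}$ obtained by choosing, in the $i$th block, the column whose only nonzero entry is $g_{k_i}$ in row $i$. That submatrix is diagonal with entries $g_{k_1},\dots,g_{k_\mu}$. Such products generate $\ma^\mu$, so $\ma^\mu\subseteq I_\mu(\psi)$. This is the same as the general fact $\ann(N)^{\mu(N)}\subseteq\Fitt_0(N)$ from \cref{Fittingideals}, applied to $N=L/\ma L$. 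Combining the two, $\Fitt^R_0(L)+\ma^\mu\subseteq \Fitt^R_0(L\otimes_R S)$.

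There is no genuine obstacle. The only points needing care are:
\begin{itemize}
\item checking that $\psi$ really presents $L/\ma L$ over $R$ with the same free module $R^\mu$ in degree zero, so that the relevant minors are $\mu\times\mu$ in both cases;
\item the sign and expansion bookkeeping in the right inclusion.
\end{itemize}
If $\mu$ is not minimal, the argument is unchanged, since any presentation with $\mu$ generators works.
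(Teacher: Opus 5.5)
Your proof is correct and follows the same route as the paper. Both arguments present $L\otimes_R S$ over $R$ by adjoining the columns $g_k e_i$ to a presentation matrix of $L$ (your $(A_1)^{\oplus\mu}$ is the paper's $[a_1I_{\beta_0}\cdots a_nI_{\beta_0}]$ up to column order) and then compare maximal minors. The paper records the full identity $\Fitt^R_0(L\otimes_R S)=\sum_{j=0}^{\mu}\ma^j\Fitt^R_j(L)$ and reads off both inclusions using $\Fitt^R_\mu(L)=R$, whereas you check the two inclusions directly by Laplace expansion and by exhibiting diagonal minors.
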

\begin{proof}
    Suppose that $R^{\beta_1} \xrightarrow{\phi} R^{\beta_0} \to L \to 0$ is a presentation matrix for $L$. If $\ma = (a_1, \ldots, a_n)$, define a matrix 
    \[ A = [a_1I_{\beta_0}  \ldots  a_nI_{\beta_0}],\]
    where $I_{\beta_0}$ is the identity matrix of size $\beta_0$. Then note that $\phi' = (\phi | A)$ is a presentation matrix for $L \otimes_R S$ over $R$. It is now easy to see that \[ \Fitt^R_0(L \otimes_R S) = \sum_{j = 0}^{\mu} \ma^j\Fitt^R_j(L).\] Noting that $\Fitt^R_{\mu}(L) = R$, the assertion now follows readily. 
\end{proof}

We record the following fundamental theorem from Hilbert--Samuel theory as a lemma for future use, see \cite[Section 13]{Matsumura:1989}.  
\begin{lemma} \label{lemma}
    Suppose that $(R, \m)$ is a noetherian local ring. If $\mathfrak b$ and $\mathfrak c$ are ideals such that $\sqrt{\mathfrak b + \mathfrak c} = \m$, then \[ \ell_R(R/(\mathfrak b + \mathfrak c^n)) \approx n^{\dim R/\mathfrak b}.\]
\end{lemma}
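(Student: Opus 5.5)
The plan is to reduce to the classical Hilbert--Samuel theorem by passing to the quotient ring $\overline{R} = R/\mathfrak b$. First I check that $\mathfrak b$ is a proper ideal. If $\mathfrak b = R$, then $\sqrt{\mathfrak b + \mathfrak c} = R \neq \m$, which contradicts the hypothesis. So $(\overline{R}, \overline{\m})$ is a noetherian local ring with $\dim \overline{R} = \dim R/\mathfrak b$.

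Next, set $\overline{\mathfrak c} = \mathfrak c\overline{R}$. Since $\sqrt{\mathfrak b + \mathfrak c} = \m$, the only prime of $\overline{R}$ containing $\overline{\mathfrak c}$ is $\overline{\m}$. Hence $\overline{\mathfrak c}$ is $\overline{\m}$-primary. For every $n \geq 1$ we have $\overline{\mathfrak c}^{\,n} = (\mathfrak c^n + \mathfrak b)/\mathfrak b$, so
\[ R/(\mathfrak b + \mathfrak c^n) \cong \overline{R}/\overline{\mathfrak c}^{\,n}. \]
An $R$-module annihilated by $\mathfrak b$ has the same length over $R$ and over $\overline{R}$, because the composition series and the simple subquotients coincide. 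It follows that $\ell_R(R/(\mathfrak b+\mathfrak c^n)) = \ell_{\overline{R}}(\overline{R}/\overline{\mathfrak c}^{\,n})$.

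Now I invoke the Hilbert--Samuel theory in \cite[Section 13]{Matsumura:1989}. For an $\overline{\m}$-primary ideal $\overline{\mathfrak c}$, the function $n \mapsto \ell_{\overline{R}}(\overline{R}/\overline{\mathfrak c}^{\,n})$ agrees for $n \gg 0$ with a polynomial $P(n)$. By the dimension theorem \cite[Theorem 13.4]{Matsumura:1989}, $\deg P = \dim \overline{R}$, and this degree does not depend on the choice of $\overline{\m}$-primary ideal. Taking $f = g = P$ gives $\ell_R(R/(\mathfrak b + \mathfrak c^n)) \approx n^{\dim R/\mathfrak b}$.

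There is no real obstacle. The only points needing care are the degenerate case $\dim R/\mathfrak b = 0$ and the fact that the degree of the Hilbert--Samuel polynomial for an arbitrary primary ideal equals the dimension. In the degenerate case $P$ is a positive constant, which is a polynomial of degree $0$, as required. For the second point, the degree statement for an arbitrary $\overline{\m}$-primary ideal, not just $\overline{\m}$, is exactly the content of Matsumura's dimension theorem. Alternatively, it follows from the inclusions $\overline{\m}^{\,kn} \subseteq \overline{\mathfrak c}^{\,n} \subseteq \overline{\m}^{\,n}$ for some fixed $k$, which sandwich the length between two Hilbert--Samuel functions of $\overline{\m}$ of the same degree.
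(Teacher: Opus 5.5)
Your proof is correct and takes essentially the paper's approach. The paper gives no argument beyond citing the Hilbert--Samuel theory in \cite[Section 13]{Matsumura:1989}, and your reduction to $\overline{R}=R/\mathfrak b$ (where $\mathfrak c\overline{R}$ becomes $\overline{\m}$-primary) followed by the dimension theorem is the standard way that citation applies, with the edge cases ($\mathfrak b$ proper, $\dim R/\mathfrak b=0$, degree independent of the primary ideal) handled correctly.
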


\begin{proof}[Proof of \cref{theTheorem}(2)]
First suppose that there exists a finitely generated $R$-module $L$ such that $L \otimes_R S \cong M$ and $\dim L \geq r$. Then $\{ M_n \}_{n \geq 1}=\{ L/\ma^n L\}_{n\geq 1}$ is a lifting system for $M$. Let $I = \Fitt^R_0(L)$ and $I_n = \Fitt^R_0(M_n)$ for all $n \geq 1$. By \cref{fittingLemma}, we have the following containments for all $n \geq 1$ 
\begin{equation} \label{containment}
    I + \ma^{n\mu} \subseteq I_n \subseteq I + \ma^n,
\end{equation}
where $\mu=\mu_{R}(L)$. Therefore, $\ell_R(R/I_n) \approx n^{d}$ for $d = \dim R/I = \dim L$ by applying \cref{lemma}. 

As for the converse, given such a lifting system $\{ M_n \}_{\geq 1}$, let $L$ be its associated lift. By \cref{liftingprop}, one has that $L \otimes_R R/\ma^n \cong M_n$ for each $n \geq 1$, and thus by applying \cref{fittingLemma}, one has the containment \cref{containment} as above for each $n \geq 1$. Using our hypothesis and applying \cref{lemma}, we have that $\dim L = \dim R/I = d \geq r$. This completes the proof.
\end{proof}

\begin{remark}
    The converse of \cref{theTheorem}(2) holds if the lifting system considered $\{ M_n \}_{\geq 1}$ satisfies $\ell_R(R/J_n) \approx n^r$ where $J_n = \ann_R(M_n)$; this follows by \cref{Fittingideals}. However, we do not know if the the forward direction also holds. 
\end{remark}
     
\begin{corollary}\label{cor2}
    Given a surjection of noetherian local rings $R \twoheadrightarrow S = R/\ma$ where $R$ is $\ma$-adically complete, if $R$ is regular or $S$ is a perfect $R$-module, then a finite length $S$-module $M$ Serre lifts to $R$ if and only if $M$ admits a lifting system $\{M_n\}_{n \geq 1}$ such that $\ell_R(R/\Fitt_0^R(M_n)) \approx n^{\dim R - \dim S}$.  
\end{corollary}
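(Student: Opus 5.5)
The plan is to reduce \Cref{cor2} to \Cref{theTheorem}(2), applied with $r = \dim R - \dim S$. By definition, $M$ Serre lifts to $R$ exactly when $\ldim{\varphi}{M} = \dim R - \dim S$. The hypothesis that $R$ is regular, or that $S$ is a perfect $R$-module, guarantees the upper bound \cref{ldimbound}, namely $\ldim{\varphi}{M} \leq \dim R - \dim S$. In the regular case this is Serre's dimension inequality applied to $L \otimes_R S \cong M$ of finite length: $\dim L + \dim S \leq \dim R$. In the perfect case it is the New Intersection theorem, or the Peskine--Szpiro intersection theorem for perfect modules, as cited in the introduction. So Serre liftability is equivalent to $\ldim{\varphi}{M} \geq \dim R - \dim S$.

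For the forward direction, take a Serre lift $L$, so $\dim L = r := \dim R - \dim S$. As in the proof of \Cref{theTheorem}(2), the lifting system $M_n = L/\ma^n L$ satisfies $I + \ma^{n\mu} \subseteq I_n \subseteq I + \ma^n$, where $I = \Fitt_0^R(L)$. Then \cref{lemma} gives $\ell_R(R/I_n) \approx n^{\dim R/I} = n^{\dim L} = n^r$. This is the required growth rate. Applying \cref{lemma} needs $\sqrt{I + \ma} = \m$, which holds because $L \otimes_R S = M$ has finite length and $\Supp(L\otimes S) = \V(I + \ma)$.

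For the converse, a lifting system with $\ell_R(R/\Fitt_0^R(M_n)) \approx n^{r}$ gives, by \Cref{theTheorem}(2) with $d = r$, a naive lift $L = \varprojlim M_n$ with $\dim L \geq r$. Combined with the upper bound from the first paragraph, this forces $\dim L = r$, so $L$ is a Serre lift.

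The main obstacle is justifying the upper bound in the perfect case. If $S$ is perfect over $R$ and $M = L\otimes_R S$ has finite length, we need $\dim L \leq \dim R - \dim S$. I would proceed as follows. Perfection makes $S$ Cohen--Macaulay whenever $R$ is Cohen--Macaulay; more generally the intersection theorem gives $\dim L \leq \pdim_R S$. Perfection means $\pdim_R S = \operatorname{grade} \ma$, and $\operatorname{grade}\ma \leq \height \ma \leq \dim R - \dim S$. If needed, I would simply invoke the introduction's statement that \cref{ldimbound} holds in these cases.
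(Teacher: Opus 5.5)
Your proposal is correct and follows the paper's proof. Both apply \Cref{theTheorem}(2) with $r=\dim R-\dim S$ and close the converse with an upper bound: Serre's dimension inequality when $R$ is regular, and the chain $\dim L\le \pdim_R S=\operatorname{grade}\ma\le\height\ma\le\dim R-\dim S$ (intersection theorem plus perfection) when $S$ is perfect. In the forward direction, your direct computation $\ell_R(R/I_n)\approx n^{\dim L}$ is the argument inside the paper's proof of \Cref{theTheorem}(2), and it makes explicit that the exponent is exactly $\dim R-\dim S$ rather than only at least that.
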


\begin{proof}
    If $M$ Serre lifts to an $R$-module $L$, then
    \[\dim{L} = \dim{R}-\dim{S},\]
    and so the lifting dimension of $M$ is at least $\dim{R}-\dim{S}$, which by \cref{theTheorem}(2) implies $M$ admits a lifting system with the required property on Fitting ideals.

    Conversely, suppose that $M$ admits a lifting system $\{M_n\}_{n \geq 1}$ such that 
    \[\ell_R(R/\Fitt_0^R(M_n)) \approx n^{\dim R - \dim S}.\]
    We need to show that $\dim{L}$ is equal to $\dim{R}-\dim{S}$. When $R$ is regular, we have
    \[\dim{L} \leq \dim{R}-\dim{S} \leq \dim{L}\]
    where the first inequality is by Serre's dimension inequality \cite[Ch.V, Thm.3]{Serre} and the second inequality is by \cref{theTheorem}(2) taking $d=r=\dim{R}-\dim{S}$. Now assume that $S$ is perfect. The following inequalities always hold
    \[\operatorname{grade}_{R}S \leq \height\ann_{R}(S) \leq \dim{R} - \dim{S} \leq \pdim_{R}S.\]
    The third inequality is the Intersection theorem of Peskine and Szpiro \cite{Hochster:1975b, Peskine/Szpiro:1973, RobertsNIT:1987, Roberts:1989}, and see \cite[(2.3.3)]{Avramov/Foxby:1998} for the first two. Since $S$ is perfect, the grade and projective dimension of $S$ are equal, giving the equality below
    \[\dim{R}-\dim{S} \leq \dim{L} \leq \pdim_{R}S = \dim{R}-\dim{S}.\]
    The first inequality is by \cref{theTheorem}(2) taking $d=r=\dim{R}-\dim{S}$ and the second inequality is by the Intersection theorem of Peskine and Szpiro.
\end{proof}

In the following example we use the results of this section to compute the lifting dimension of $M$ from \cref{workingexample}.

\begin{example}
We go back to \cref{workingexample} again. The calculation in \cref{workingexamplefordepth} shows that $M$ lifts to $R$ and thus by \cite[Proposition 1.2]{KC/SotoLevins:2024} we know that $M$ also Serre lifts to $R$, i.e. $\ldim{\varphi}{M} = \ldepth{\varphi}{M} =2.$

However, for demonstration, we compute $\ldim{\varphi}{M}$ by using the theorem above. Note that  
\[ I_n = \text{Fitt}_{0}^{R}(M_n) = (z^2-st)+ \ma^n(s, t, z, \ma^n) \subseteq (z^2 -st) + \ma^n, \] 
 and thus observe that \[ \ell_R(R/I_n) \geq \ell_R\left(\frac{k[[x, y, z]]}{(x^2, y^2)^n + (z^2-st)} \right)= 2 \ell_R\left(\frac{k[[x, y]]}{(x^2, y^2)^n} \right) \approx n^2.\]
Therefore, \[ 2 \leq \ldim{\varphi}{M} \leq 2,\]where the first inequality is due to the theorem above and the second is due to \cref{ldimbound}. 
 \end{example}

\section{Applications and examples} \label{S_Examples}
Utilizing the methods introduced in this paper, in this section we produce examples of unliftable modules of finite length and finite projective dimension that are Serre liftable and identify a sufficient condition for cyclic modules to intersect properly over a regular ring; see \cref{E_UnliftableSerreLiftbleExample} and \Cref{T_IntersectProperlyEquimultiple} respectively.

\subsection{Unliftable modules with Serre lifts.} \label{unliftablemodules}
It is an interesting problem to produce Cohen--Macaulay modules of finite projective dimension that do not come from a regular ring; that is, modules that do not lift to a regular ring. There exists an unliftable finite length module of finite projective dimension over a two dimensional quadratic hypersurface that admits a Serre lift to a regular ring \cite[Example 3.2]{KC/SotoLevins:2024}. In this section, we produce a family of unliftable finite length modules of finite projective dimension over local complete intersection rings of arbitrary (Krull) dimension and codimension at least two and these modules admit Serre lifts to a regular ring; see \Cref{E_UnliftableSerreLiftbleExample}. 

Throughout this subsection, we assume the following setup: 
    Let $k$ be a field and $R=k[[x_1,...,x_d,y_1,...,y_d]]$ for some $d\geq 2$. Consider the ideal $\ma = (g_1,...,g_d)$ where $g_i = x_i^d+y_i^d$ for $1 \leq i \leq d$, and set $S=R/\ma$. Let $\varphi\colon R\rightarrow S$ be the quotient map. Note that $S$ is a $d$-dimensional local complete intersection ring of codimension $d$.

The sequences $x_1^{d-1}, \ldots, x_d^{d-1}$ and $y_1^{d-1}, \ldots, y_d^{d-1}$ both form maximal regular sequences in $S$. Therefore, we have a well-defined $S$-linear map  
\[ \eta\colon(x_1^{d-1},\ldots,x_d^{d-1})\xrightarrow{}S/(y_1^{d-1},\ldots,y_d^{d-1}),\] 
where $\eta(x_i^{d-1})=-f_i$, with $f_i=\prod_{j \not=i}x_j$ for $1 \leq i \leq d$. To see that $\eta$ is a well defined $S$-linear map, let $e_{1},\dots,e_{d}$ be a basis for $S^{d}$. Note that the kernel of the map $S^{d}\rightarrow S/(y_1^{d-1},\ldots,y_d^{d-1})$ defined by $e_{i}\mapsto -f_{i}$ contains the Koszul relations $x_i^{d-1}e_{j} - x_j^{d-1}e_{i}$, and since $x_1^{d-1},\ldots,x_d^{d-1}$ is an $S$-regular sequence, the Koszul relations are the only relations on $(x_1^{d-1},\ldots,x_d^{d-1})$.

Let $M$ denote the $S$-module formed by the following pushout 
\[\begin{tikzcd}
	{(x_1^{d-1},\ldots,x_d^{d-1})} & S \\
	{S/(y_1^{d-1},\ldots,y_d^{d-1})} & M.
	\arrow[hook, from=1-1, to=1-2]
	\arrow["\eta", from=1-1, to=2-1]
	\arrow[from=1-2, to=2-2]
	\arrow[from=2-1, to=2-2]
\end{tikzcd}\]

\begin{proposition} \label{P_UnliftableSerreLiftbleResult}
    $M$ is an $S$-module of finite length and finite projective dimension. Furthermore, the following is a presentation matrix for $M$ as an $S$-module. \[ \phi = \begin{pmatrix}
        0 &\ldots & 0 & x_1^{d-1}&\ldots & x_d^{d-1}\\
        y_1^{d-1}& \ldots & y_d^{d-1} & f_1 &\ldots &f_d 
    \end{pmatrix}\] 
\end{proposition}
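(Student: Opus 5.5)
The plan is to read off everything from the pushout square. Write $\mathfrak x = (x_1^{d-1},\ldots,x_d^{d-1})$ and $\mathfrak y = (y_1^{d-1},\ldots,y_d^{d-1})$, both ideals of $S$.

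\textbf{Presentation.} By the universal property of the pushout,
\[M \cong \bigl(S \oplus S/\mathfrak y\bigr)\big/\{(u,\eta(u)) : u \in \mathfrak x\}.\]
(Sign conventions are absorbed into $\eta(x_i^{d-1})=-f_i$.) So $M$ is generated by the two images of $e_1=(1,0)$ and $e_2=(0,1)$. Its relations are generated by two families:
\begin{enumerate}
\item $y_j^{d-1}e_2$ for $1\le j\le d$, coming from the presentation of $S/\mathfrak y$;
\item $x_i^{d-1}e_1 + \eta(x_i^{d-1})$-type relations for $1\le i \le d$, from the generators of $\mathfrak x$, which after the sign convention read $x_i^{d-1}e_1 + f_ie_2$.
\end{enumerate}
These are exactly the columns of $\phi$. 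The only point to check is that no further relations are needed. It suffices to quotient by the images of generators of $\mathfrak x$, since the relation submodule is the image of $\mathfrak x \to S\oplus S/\mathfrak y$, which is generated by the images of the generators of $\mathfrak x$. The well-definedness of $\eta$ was already verified in the text via the Koszul relations.

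\textbf{Finite length.} The cokernel of $S/\mathfrak y \to M$ is $S/\mathfrak x$. Hence $M$ is an extension of $S/\mathfrak x$ by a quotient of $S/\mathfrak y$. Both $S/\mathfrak x$ and $S/\mathfrak y$ have finite length, because $\mathfrak x$ and $\mathfrak y$ are generated by systems of parameters of the $d$-dimensional ring $S$ (as stated, both sequences are maximal regular sequences). Therefore $M$ has finite length.

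\textbf{Finite projective dimension.} Since $S$ is a complete intersection, a regular sequence generates an ideal of finite projective dimension. Concretely:
\begin{itemize}
\item $S/\mathfrak x$ and $S/\mathfrak y$ are resolved by Koszul complexes, so they have projective dimension $d$;
\item $\mathfrak x$ has projective dimension $d-1$.
\end{itemize}
The pushout gives a short exact sequence
\[0 \to \mathfrak x \xrightarrow{(\iota,-\eta)} S \oplus S/\mathfrak y \to M \to 0.\]
To justify exactness on the left, I check that the map is injective: its first component is the inclusion $\mathfrak x \hookrightarrow S$. The outer terms have finite projective dimension, so the two-out-of-three property gives $\pdim_S M \le d$, hence finite. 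I expect no real obstacle here. The only delicate point is this injectivity, and it is immediate.
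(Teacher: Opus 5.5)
Your proposal is correct and follows essentially the paper's route: the paper reads the presentation off the pushout and gets finite length and finite projective dimension from the extension $0\to S/\mathfrak y\to M\to S/\mathfrak x\to 0$, which represents a class in $\Ext^1_S(S/\mathfrak x,S/\mathfrak y)$. You use that extension only for finite length, and for the projective dimension you use the cokernel sequence $0\to\mathfrak x\to S\oplus S/\mathfrak y\to M\to 0$, which is a harmless variation; the sign mismatch between $\{(u,\eta(u))\}$ and $(\iota,-\eta)$ is immaterial, since $e_2\mapsto -e_2$ identifies the two quotients.
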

\begin{proof}
    Given its construction, $M$ is a representative of a class in 
    \[ \Ext^1_S\left(\frac{S}{(x_1^{d-1}, \ldots, x_d^{d-1})}, \frac{S}{(y_1^{d-1}, \ldots, y_d^{d-1})}\right),\] 
    see \cite[Lemma 7.28]{Rotman:2009}. Thus, the first assertion follows as these modules are quotients by maximal regular sequences in $S$ and hence are of finite projective dimension. The assertion about the presentation matrix follows by the pushout construction.
\end{proof}

Our goal is to prove the following. 
\begin{theorem} \label{E_UnliftableSerreLiftbleExample}
    Consider $\varphi\colon R \twoheadrightarrow S$ and the finite length $S$-module $M$ as above. 
    \begin{enumerate}
        \item \label{part1} There exists a lifting system $\{M_n\}_{n\geq 1}$ along $\varphi$ for $M_1 = M$ such that 
        \[ \ell_R(R/\Fitt_0^R(M_n)) \approx n^d.\] 
        In particular, $M$ Serre lifts to $R$. 
        \item \label{part2} Every lifting system $\{M_n\}_{n\geq 1}$ for $M_1 = M$ satisfies 
        \[ \varprojlim \Tor_1^R(M_n, S) \not= 0.\] 
        In particular, $M$ does not lift to $R$.
    \end{enumerate}
\end{theorem}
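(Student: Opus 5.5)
The plan for part \eqref{part1} is to take the most naive lift. View the entries of $\phi$ from \Cref{P_UnliftableSerreLiftbleResult} as elements of $R$, and set $L=\coker_R(\phi)$, so that $L\otimes_R S\cong M$. Put $M_n=L/\ma^nL$; this is a lifting system for $M$ along $\varphi$, and it is the system used in the forward direction of the proof of \Cref{theTheorem}(2). By \Cref{fittingLemma} and \Cref{lemma}, exactly as in that proof, $\ell_R(R/\Fitt^R_0(M_n))\approx n^{\dim R/I}$ with $I=\Fitt^R_0(L)=I_2(\phi)$. So everything reduces to showing $\dim R/I_2(\phi)=d$.

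For the lower bound I would exhibit the $d$-dimensional prime $P=(x_1,\ldots,x_d)$ and check that it contains every $2\times 2$ minor of $\phi$. Since $d\geq 2$, each $f_i=\prod_{j\neq i}x_j$ lies in $P$, and there are three kinds of minors. Two columns of the first block give minor $0$. A column of the first block against one of the second gives $-x_j^{d-1}y_i^{d-1}\in P$. Two columns of the second block give $x_i^{d-1}f_j-x_j^{d-1}f_i\in P$. Hence $\dim L=\dim R/I\geq d$. For the upper bound, Serre's dimension inequality over the regular ring $R$ gives $\dim L\leq \dim R-\dim S=d$. So $\dim L=d$, $M$ Serre lifts, and \Cref{cor2} (or directly the computation above) yields $\ell_R(R/\Fitt_0^R(M_n))\approx n^d$.

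For part \eqref{part2}, \Cref{cor1} reduces the statement to showing that $M$ does not lift to $R$. Indeed, for any lifting system $\{M_n\}$ with associated lift $L'$, \cref{invlimitiso} gives $\varprojlim\Tor_1^R(M_n,S)\cong \Tor_1^R(L',S)$, and this vanishes only if $L'$ is a lift. So suppose $L$ is a lift. Then $g_1,\ldots,g_d$ is an $L$-regular sequence, the minimal $R$-free resolution $F$ of $L$ reduces modulo $\ma$ to the minimal $S$-free resolution of $M$, and $\pdim_RL=\pdim_SM=d$, so $L$ is Cohen--Macaulay of dimension $d$. I would extract a contradiction from the first two steps of the resolution. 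The minimal presentation $\phi$ is $2\times 2d$, so $F_1\to F_0$ is a $2\times 2d$ matrix $\Phi\equiv\phi \pmod{\ma}$. The map $F_2\to F_1$ lifts the syzygies of $\phi$ over $S$, and these syzygies include the Koszul-type relations among the $x_i^{d-1}$ and $y_i^{d-1}$ together with relations that use $g_i=x_i\cdot x_i^{d-1}+y_i\cdot y_i^{d-1}=0$ in $S$. The first thing I would try is to show that some such $S$-syzygy, necessarily built from these $g_i$-relations, cannot be lifted to an $R$-syzygy of any $\Phi\equiv\phi \pmod{\ma}$. That is precisely the failure of $\Tor_1^R(L,S)=0$, equivalently the nonvanishing of the first Eisenbud-operator obstruction. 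The concrete step is to compute the products $\Phi\cdot\Psi$ modulo $\ma^2$ for a general lift $\Phi=\phi+\sigma_1$ as in \Cref{thisiswhatitlookslike} and show that the $\ma/\ma^2$-component can never vanish.

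The main obstacle is this final non-liftability step: one must control all lifts $\Phi$ simultaneously, not just the naive one. If the direct syzygy computation becomes unwieldy, I would fall back on a numerical obstruction. A lift would make $L$ Cohen--Macaulay with $\ell_S(M)=e(\ma;L)$. Since $L$ is supported on $V(I_2(\Phi))$, this multiplicity can be bounded below by the contribution of the component through $P=(x_1,\ldots,x_d)$ and its conjugates. Comparing with $\ell_S(M)=\ell(S/(x^{d-1}))+\ell(S/(y^{d-1}))=2d^d(d-1)^d$ might then give the contradiction.
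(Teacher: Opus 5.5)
Your part (1) is correct and matches the paper. The paper also uses the system with all $\sigma_j=0$, gets the lower bound from $\Fitt_0^R(M_n)\subseteq(x_1,\ldots,x_d)+\ma^n$ (your $I_2(\phi)\subseteq P$), and gets the upper bound from Serre's inequality. Your reduction of part (2) to non-liftability via \cref{cor1} is also valid.

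\textbf{The gap.} Non-liftability is the whole content of part (2), and you never establish it. You describe a computation you would try and name the obstacle (controlling every lift $\Phi$ at once), but you do not overcome it. Your fallback cannot work as stated, for two reasons:
\begin{enumerate}
\item For every naive lift $L$, Serre's positivity $\chi_1(g_1,\ldots,g_d;L)\geq 0$ gives $e(\ma;L)\leq\ell_R(L/\ma L)=\ell_S(M)$. So no lower bound on $e(\ma;L)$ can contradict $e(\ma;L)=\ell_S(M)$; you would need a strict upper bound valid for all lifts.
\item $P$ need not lie in $\V(I_2(\Phi))$ once $\Phi\neq\phi$. For example, putting $g_1$ in the top-left entry destroys $I_2(\Phi)\subseteq P$.
\end{enumerate}

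\textbf{The missing idea.} You need one explicit syzygy together with an $\m$-adic order argument that does not see the perturbation. Let $h=x_1\cdots x_{d-2}\in\m^{d-2}\setminus\m^{d-1}$. Let $e_1,\ldots,e_{2d}$ be the standard basis of $R^{2d}$ and $\epsilon_2=(0,1)^{T}\in R^2$. Using $f_{d-1}=hx_d$ and $f_d=hx_{d-1}$, one checks that
\[
v=-hy_{d-1}e_{d-1}+hy_de_d+x_d^{d-1}e_{2d-1}-x_{d-1}^{d-1}e_{2d}\quad\text{satisfies}\quad \phi v=h(g_d-g_{d-1})\epsilon_2 .
\]
Take any lift $\Phi=\phi+\sum_j\sigma_j$ as in \cref{thisiswhatitlookslike}, and write $\Phi-\phi=\sum_k g_k\Sigma_k$. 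Then $\Phi v=\sum_k g_km_k$, where
\[
m_{d-1}=-h\epsilon_2+\Sigma_{d-1}v,\qquad m_d=h\epsilon_2+\Sigma_dv,\qquad m_k=\Sigma_kv \text{ otherwise}.
\]
So the classes of $(m_k)_k$ form a $1$-cycle in $\Kos_R(g_1,\ldots,g_d)\otimes_R\coker\Phi$.

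This cycle is not a boundary. All entries of $v$ and of $\Phi$ lie in $\m^{d-1}$, so $m_d\notin\m^{d-1}R^2$, while $\m^dR^2+\operatorname{im}\Phi\subseteq\m^{d-1}R^2$. On the other hand, Koszul boundaries lie in $\m^d(\coker\Phi)^{\oplus d}$.

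Hence $\Tor_1^R(\coker\Phi,S)\neq0$ for every $\Phi$ simultaneously. This is exactly what \cref{L_BenKesavan} implements level by level in the lifting system.

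The same identity read modulo $\ma^2$ shows the obstruction is already first order. So your mod-$\ma^2$ plan would go through once you have $v$ and the order argument.
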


Before proving the theorem, we first need a technical lemma needed for \cref{E_UnliftableSerreLiftbleExample}\cref{part2}. Below, we write $(N)_j$ for the $j$th column of a matrix $N$. 
\begin{lemma} \label{L_BenKesavan} 
    Consider $\varphi\colon R \twoheadrightarrow S$ and the finite length $S$-module $M$ as above. Let $\{M_n\}_{n \geq 1}$ be a lifting system for $M$ along $\varphi$, and following the notation of \cref{thisiswhatitlookslike}, let $\phi_n$ be a presentation matrix of $M_n$ as an $R$-module for $n \geq 1$.
    
    Let $h = \prod_{i \leq {d-2}}x_i \in R$. For each $n \geq 0$, there exists 
    \[m_n = (m_{n, 1}, \ldots, m_{n, d}) \in (R^{\oplus 2})^{\oplus d}\] 
    such that \begin{enumerate}
        \item The following equality holds over $R^{\oplus 2}$ 
        \[\sum_{i=1}^d g_i m_{n, i} = -hy_{d-1}(\phi_{n+1})_{d-1} + hy_d(\phi_{n+1})_d + x_d^{d-1}(\phi_{n+1})_{2d-1} - x^{d-1}_{d-1}(\phi_{n+1})_{2d}. \]
        \item $m_{n, d-1}, m_{n, d} \in \m^{d-2}R^{\oplus 2} \setminus \m^{d-1}R^{\oplus 2},$ and
        \item $m_{n+1} - m_n \in \ma^{n}(R^{\oplus 2})^{\oplus d}$.
    \end{enumerate}
\end{lemma}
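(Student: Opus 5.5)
The plan is to build the sequence $m_n$ by induction on $n$. We start from an explicit computation with the unperturbed matrix $\phi$. At each later step we absorb the new perturbation $\sigma_n$ using the decomposition $\sigma_n=\sum_k \sigma_{n,k}g_k$ from \cref{thisiswhatitlookslike}(2). Throughout, we view $\phi$ over $R$ via its given polynomial entries. Only the first $2d$ columns of $\phi_{n+1}$ appear in (1), and these are the columns of $\phi+\sum_{j=1}^{n}\sigma_j$. We write $e_2$ for the second basis vector of $R^{\oplus 2}$.

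\emph{Base case $n=0$.} Here $\phi_1$ has first $2d$ columns equal to those of $\phi$. These are
\[
(\phi)_{d-1}=(0,y_{d-1}^{d-1}),\quad (\phi)_d=(0,y_d^{d-1}),\quad (\phi)_{2d-1}=(x_{d-1}^{d-1},f_{d-1}),\quad (\phi)_{2d}=(x_d^{d-1},f_d).
\]
Now compute the right-hand side of (1). Its first coordinate is $x_d^{d-1}x_{d-1}^{d-1}-x_{d-1}^{d-1}x_d^{d-1}=0$. For the second coordinate, use $f_{d-1}=hx_d$ and $f_d=hx_{d-1}$. It equals
\[
-hy_{d-1}^d+hy_d^d+hx_d^d-hx_{d-1}^d=h\,g_d-h\,g_{d-1}.
\]
So we set $m_0=(0,\ldots,0,-he_2,he_2)$, and (1) holds. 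Since $h$ is a monomial of degree $d-2$, we have $m_{0,d-1},m_{0,d}\in \m^{d-2}R^{\oplus 2}\setminus \m^{d-1}R^{\oplus 2}$, which gives (2).

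\emph{Inductive step.} Pass from $\phi_{n}$ to $\phi_{n+1}$; the first $2d$ columns change by the columns of $\sigma_n$. Write $c=(-hy_{d-1},\,hy_d,\,x_d^{d-1},\,-x_{d-1}^{d-1})$ for the coefficients attached to the columns $d-1,d,2d-1,2d$. The right-hand side of (1) then changes by
\[
\sum_{k=1}^d g_k\Big(-hy_{d-1}(\sigma_{n,k})_{d-1}+hy_d(\sigma_{n,k})_d+x_d^{d-1}(\sigma_{n,k})_{2d-1}-x_{d-1}^{d-1}(\sigma_{n,k})_{2d}\Big).
\]
Call the bracketed vector $v_{n,k}$ and define $m_{n,k}=m_{n-1,k}+v_{n,k}$; then (1) for $n$ follows from (1) for $n-1$. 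The entries of $\sigma_{n,k}$ lie in $\ma^{n-1}$, so $m_{n}-m_{n-1}\in\ma^{n-1}(R^{\oplus 2})^{\oplus d}$. Shifting the index by one, $m_{n+1}-m_n\in \ma^{n}(R^{\oplus 2})^{\oplus d}$, which is (3). For (2), every coefficient in $c$ lies in $\m^{d-1}$ (the products $hy$ and the powers $x^{d-1}$ all have degree $d-1$). Hence every $v_{n,k}\in \m^{d-1}R^{\oplus 2}$, and $m_{n,d-1}\equiv -he_2$ and $m_{n,d}\equiv he_2$ modulo $\m^{d-1}R^{\oplus 2}$ for all $n$. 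This gives (2).

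The only step requiring care is the index bookkeeping between $\phi_{n+1}$, $\sigma_n$ and the filtration in (3). We also have to note that (2) does not require $\sigma_{n,k}$ to have entries in $\m$: the coefficients in $c$ already provide the needed power $\m^{d-1}$. Beyond that, the base-case identity $hg_d-hg_{d-1}$ is the heart of the lemma. It is exactly where the specific choice $g_i=x_i^d+y_i^d$ and $f_i=\prod_{j\neq i}x_j$ is used.
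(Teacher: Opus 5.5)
Your proof is correct and follows essentially the same route as the paper. You use the same base vector $m_0=(0,\ldots,0,-he_2,he_2)$ and the same correction terms (your $v_{n,k}$ are the paper's $r_{n,i}$), with the same telescoping induction for (1) and (3). Your argument for (2) when $n\geq 1$ is more careful than the paper's, which just says ``by definition'': you note that every coefficient $-hy_{d-1}$, $hy_d$, $x_d^{d-1}$, $-x_{d-1}^{d-1}$ lies in $\m^{d-1}$, which handles the $j=1$ correction term, whose entries are only known to lie in $\ma^{0}=R$.
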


\begin{proof}
    Due to \cref{thisiswhatitlookslike}, for each $j \geq 1$, we can write, \begin{equation} \label{decomp}
    \sigma_j = \sum_{k=1}^d \sigma_{j,k} g_k
\end{equation} 
where $\sigma_{j, k}$ is a matrix with each entry in $\ma^{j-1}$. Let $h = \prod_{i \leq {d-2}}x_i$ and define for each $n \geq 1$ and for $ 1 \leq i \leq d$, 
    \[ r_{n, i} = -hy_{d-1}(\sigma_{n, i})_{d-1} + hy_d(\sigma_{n, i})_d + x_d^{d-1}(\sigma_{n, i})_{2d-1} - x^{d-1}_{d-1}(\sigma_{n, i})_{2d} \in R^{\oplus 2},\]
and note that $r_{n,i} \in \ma^{n-1} R^{\oplus 2}$ due to \cref{decomp}. Let $m_{0, d-1} = \begin{pmatrix}
    0 \\ -h
\end{pmatrix}$, $m_{0, d} = \begin{pmatrix}
    0 \\ h
\end{pmatrix}$, and $m_{0, i} = \begin{pmatrix}
    0 \\ 0
\end{pmatrix}$ for $i \not= d-1, d$. Now let us define for each $n \geq 1$, 
\[m_{n, i} = m_{0, i} + \sum_{j = 1}^{n} r_{j, {i}},  \text{for }1 \leq i \leq  d.\]
Since $h \in \m^{d-2} \setminus \m^{d-1}$, \cref{L_BenKesavan}(2) holds by definition. Also by definition, we have $m_{n+1} = m_{n} + r_{n+1}$ for each $n \geq 0$, where 
    \[r_{n+1} = (r_{n+1, 1}, \ldots, r_{n+1, d}) \in \ma^{n}(R^{\oplus 2})^{\oplus d}.\]
    This proves \cref{L_BenKesavan}(3). 
    
    We now prove \cref{L_BenKesavan}(1) by induction on $n$. The $n = 0$ case is the following identity that can be checked directly where $\phi = \phi_1$. 
    \[ g_{d-1}\begin{pmatrix}
    0 \\-h
\end{pmatrix} +g_d \begin{pmatrix}
    0\\h
\end{pmatrix} = -hy_{d-1}(\phi)_{d-1} + hy_d(\phi)_d + x_d^{d-1}(\phi)_{2d-1} - x^{d-1}_{d-1}(\phi)_{2d}.\] 
Now for $n \geq 1$, we have
\[
\begin{aligned}
\sum_{i=1}^d g_i m_{n, i}
&= \sum_{i=1}^d g_i m_{n-1, i} + \sum_{i=1}^d g_i r_{n, i}       \\
&= -hy_{d-1}((\phi_{n})_{d-1} + (\sigma_{n})_{d-1}) + hy_{d}((\phi_{n})_{d} + (\sigma_{n})_{d}) \\
& \hspace{4mm}+ x_d^{d-1}((\phi_{n})_{2d-1} + (\sigma_{n})_{2d-1}) -x_{d-1}^{d-1}((\phi_{n})_{2d} + (\sigma_{n})_{2d})   \\
&= -hy_{d-1}(\phi_{n+1})_{d-1} + hy_d(\phi_{n+1})_d + x_d^{d-1}(\phi_{n+1})_{2d-1} - x^{d-1}_{d-1}(\phi_{n+1})_{2d}.
\end{aligned}
\]
The first equality is due to the fact that $m_n = m_{n-1} + r_n$, the second is by induction and the definition of $r_n$, and the third is by the fact that $\phi_{n+1} = \phi_{n} + \sigma_n$. 
\end{proof}

\begin{proof}[Proof of \Cref{E_UnliftableSerreLiftbleExample}]
    For both \cref{part1} and \cref{part2}, the ``in particular" part follows from  \cref{cor2} and \cref{cor1} respectively. 

    Assume the notation from \cref{thisiswhatitlookslike} and consider the lifting system $\{M_n\}_{n\geq 1} $ where $\sigma_j = 0$ for all $j \geq 1$. Thus, as an $R$-module, 
    \[ M_n = \coker \begin{pmatrix}
        0 &\ldots & 0 & x_1^{d-1}&\ldots & x_d^{d-1} & A_n & 0\\
        y_1^{d-1}& \ldots & y_d^{d-1} & f_1 &\ldots &f_d&0 & A_n
    \end{pmatrix}.\]
      Clearly, $ \Fitt^R_0(M_n) \subseteq (x_1, \ldots, x_d) + \ma^n$, and thus 
      \[ \ell_R(R/\Fitt^R_0(M_n)) \geq \ell_R(\overline{R}/\ma^n \overline{R}),\]
      where $\overline{R} = R/(x_1, \ldots, x_d)$ is a $d$-dimensional local ring. Thus, for $n$ large enough, $\ell_R(R/\Fitt^R_0(M_n)) \geq n^d$; this follows from the standard Hilbert-Samuel theory. On the other hand, let $L$ be the lift associated to the system and let $I = \Fitt^R_0(L)$. By \cref{liftingprop}(1) and \cref{fittingLemma}, we have the following containments 
\begin{equation*}
    I + \ma^{n\cdot\mu_{R}(L)} \subseteq \Fitt^R_0(M_n) \subseteq I + \ma^n
\end{equation*}
for all $n \geq 1$, and so $\ell_R(R/\Fitt^R_0(M_n)) \approx n^{\dim R/I}$ by applying \cref{lemma}. Since $R$ is regular, Serre's dimension inequality implies
      \[\dim R/I = \dim{L} \leq \dim{R} - \dim{S} = 2d - d = d,\]
      and so the polynomial growth rate of $\ell_R(R/\Fitt^R_0(M_n))$ is at most $n^{d}$, and so our proof for \cref{part1} is complete. 
    
  For the proof of \cref{part2}, let $\{M_n\}_{n\geq 1}$ be a lifting system for $M$ along $\varphi$. Applying \cref{thisiswhatitlookslike}, we have that at the $n$'th stage, we can write a presentation matrix of $M_n$ as an $R$-module as follows 
\[ \phi_n = \left(
\begin{array}{c|cc}
\multirow{2}{*}{$\phi + \sum_{j=1}^{n-1} \sigma_j$} & A_n& 0 \\
                       & 0   & A_n
\end{array}\right),\] 
where for each $j \geq 1$, $\sigma_j$ is a matrix the same size as $\phi$ and its entries are either zero or in $\ma^j \setminus \ma^{j+1}$. For each $n \geq 1$, let \[ K_n = \Kos_R(g_1, \ldots, g_d) \otimes_R M_{n} \] 
with the differential $\del_n$ where $\Kos_R(g_1, \ldots, g_d)$ is the Koszul complex on $g_1, \ldots, g_d$ over $R$. For each $n \geq 1$, let $\eta_{n} = \text{cl}(m_{n})$ denote the class of $m_{n}$ in $M_{n}^{\oplus d} = (K_n)_1$. Observe that 
\[ \begin{aligned}
    \del_n(\eta_n) &= \sum_{i=1}^d g_i m_{n, i} = \sum_{i=1}^d g_i m_{n-1, i} + \sum_{i=1}^d g_i r_{n, i} \\
    &= -hy_{d-1}(\phi_{n})_{d-1} + hy_d(\phi_{n})_d + x_d^{d-1}(\phi_{n})_{2d-1} - x^{d-1}_{d-1}(\phi_{n})_{2d} + \sum_{i=1}^d g_i r_{n, i}\\
    &=0 + \sum_{i=1}^d g_i r_{n, i} \in \ma^{n}(R^{\oplus2}) \\
    &=0,
\end{aligned} \] 
where the $r_{n,i}$ are as in \Cref{L_BenKesavan}, where the first equality is by the definition of $K_n$, the third is by \cref{L_BenKesavan}(1), and the fourth is because the sum is a linear combination of columns of the presentation matrix of $M_n$ and is thus zero. 

Therefore, $\eta_n$ defines a nonzero cycle in $K_n$. Viewed as a matrix, the only nonzero elements in the differential $\del_n$ are the $g_i$'s which lie in $\m^{d}$, so in conjunction with \cref{L_BenKesavan}(2), we see that $\eta_n$ defines a nonzero element in homology. Thus, 
\[ 0\not=\eta_n \in \H_1(K_n) \cong \Tor^R_1(M_n, S),\]
where the isomorphism is due to the fact that the $g_i$'s form a regular sequence in $R$ and that $\Kos_R(g_1, \ldots, g_d)$ resolves $S$ over $R$. Finally, along the natural surjection, $M_{n+1} \twoheadrightarrow M_{n+1}/\ma^n M_{n+1} = M_n$, due to \cref{L_BenKesavan}(3), $\eta_{n+1} \mapsto \eta_n$. This shows that 
\[ 0 \not= \varprojlim \eta_{n} \in \varprojlim \Tor^R_1(M_n, S),\] 
which completes the proof of the theorem. 
\end{proof}

\subsection{Serre liftable cyclic modules.}
Given a local ring $R$, two finitely generated $R$-modules $M$ and $N$ such that $\ell_R(M \otimes_RN)<\infty$ are said to \textit{intersect properly} if \[ \dim M + \dim N = \dim R.\]
If, for example, $R$ is a regular local ring, one always has that \[ \dim M + \dim N \leq \dim R, \] and it is interesting to find sufficient conditions when equality holds. In this direction, our methods yield the following result. 
\begin{theorem} \label{T_IntersectProperlyEquimultiple}
    Suppose that $(R, \m)$ is a regular local ring and $\ma$ and $\mathfrak b$ are two ideals such that $\sqrt{\ma + \mathfrak b} = \m$. If $\ma$ is an equimultiple ideal and $\mathfrak b$ does not contain any minimal generator of $\ma^n$ for some $n$ sufficiently large, then \[ \dim R/\ma + \dim R/\mathfrak b = \dim R.\]
\end{theorem}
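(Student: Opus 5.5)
The plan is to prove the two inequalities separately. The inequality $\dim R/\ma + \dim R/\mathfrak b \leq \dim R$ is Serre's dimension inequality \cite[Ch.V, Thm.3]{Serre}, applied to the cyclic modules $R/\ma$ and $R/\mathfrak b$. This applies because $R/\ma \otimes_R R/\mathfrak b = R/(\ma+\mathfrak b)$ has finite length, since $\sqrt{\ma+\mathfrak b}=\m$. For the reverse inequality, recall that $R$ is regular, hence a catenary domain, so $\dim R - \dim R/\ma = \height \ma$. It therefore suffices to show
\[\dim R/\mathfrak b \geq \height \ma.\]
In the language of the paper, consider the lifting system $M_n = R/(\mathfrak b + \ma^n)$ for $M=R/(\ma+\mathfrak b)$ along $R\to R/\ma$. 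Its lift is the $\ma$-adic completion of $R/\mathfrak b$, and \cref{lemma} (applied with the ideals $\mathfrak b$ and $\ma$) gives
\[\ell_R(R/(\mathfrak b+\ma^n)) \approx n^{\dim R/\mathfrak b}.\]
So the whole task is to exhibit a lower bound of the form $\ell_R(R/(\mathfrak b+\ma^n)) \geq f(n)$ for $n\gg0$, where $f$ is a polynomial of degree $\height\ma$. This is the Fitting-ideal growth criterion of \cref{theTheorem}(2) in this cyclic situation, since $\Fitt_0(M_n)=\mathfrak b+\ma^n$ by \cref{Fittingideals}.

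For the lower bound, I filter $R/(\mathfrak b+\ma^n)$ by the images of $\ma^j$ and get
\[\ell_R(R/(\mathfrak b+\ma^n)) = \sum_{j=0}^{n-1} \ell_R\big((\ma^j+\mathfrak b)/(\ma^{j+1}+\mathfrak b)\big).\]
Each summand is at least the length of its quotient by $\m$, namely
\[(\ma^j+\mathfrak b)/(\m\ma^j+\mathfrak b) \cong \ma^j/(\m\ma^j + (\mathfrak b\cap \ma^j)).\]
I read the hypothesis as saying that, for all $j\geq n_0$, no minimal generator of $\ma^j$ lies in $\mathfrak b$. I would then argue that $\mathfrak b\cap\ma^j \subseteq \m\ma^j$. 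With that containment, the quotient above has length exactly $\mu_R(\ma^j)$, and hence
\[\ell_R(R/(\mathfrak b+\ma^n)) \geq \sum_{j=n_0}^{n-1}\mu_R(\ma^j).\]
Now $\mu_R(\ma^j)=\ell(\ma^j/\m\ma^j)$ is the Hilbert function of the fiber cone $\bigoplus_j \ma^j/\m\ma^j$. This is a polynomial in $j$ of degree $\ell(\ma)-1$ for $j\gg0$, where $\ell(\ma)$ is the analytic spread. Since $\ma$ is equimultiple, $\ell(\ma)=\height\ma$. Summing, the right-hand side is bounded below by a polynomial of degree $\height \ma$ in $n$. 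Comparing with $\ell_R(R/(\mathfrak b+\ma^n))\approx n^{\dim R/\mathfrak b}$ then yields $\dim R/\mathfrak b\geq \height\ma$, which finishes the proof.

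The main obstacle is the step $\mathfrak b\cap \ma^j\subseteq \m\ma^j$. Taken literally, ``$\mathfrak b$ contains no minimal generator of $\ma^j$'' only says that $\mathfrak b$ misses a fixed generating set. What is really needed is that the image of $\mathfrak b\cap\ma^j$ in $\ma^j/\m\ma^j$ is zero. I would interpret the hypothesis in this stronger, basis-free sense, namely that no element of $\mathfrak b\cap \ma^j$ is part of a minimal generating set of $\ma^j$. Under that reading the containment is immediate: any element of $\ma^j\setminus\m\ma^j$ extends to a minimal generating set by Nakayama's lemma. I would also need the hypothesis for all $j$ in a range $[n_0,n]$ rather than for a single $n$, so I would read ``for some $n$ sufficiently large'' as ``for all $n\gg0$''. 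If only a single large $n$ were available, I would fall back on the one-term bound $\ell_R(R/(\mathfrak b+\ma^n))\geq\mu_R(\ma^n)$, which has degree only $\height\ma-1$. That bound alone is not enough, so the uniform reading of the hypothesis appears necessary for this approach.
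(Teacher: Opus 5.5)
Your proposal is correct and is essentially the paper's proof: the filtration count $\ell_R(R/(\mathfrak b+\ma^n))\ge\sum_{j}\mu_R(\ma^j)$ is the argument of \cref{T_XiaolingTheorem}, followed by $\ell(\ma)=\height\ma=\dim R-\dim R/\ma$ and Serre's inequality; by quoting \cref{lemma} directly for $R/\mathfrak b$, you also avoid the $\ma$-adic completeness that \cref{T_XiaolingTheorem} assumes and the theorem does not supply. Your concern about ``some'' versus ``all'' large $n$ is not an extra assumption but is settled by Artin--Rees: if $s=\operatorname{ar}_{\ma}(\mathfrak b)$ and the containment holds for one $n\ge s$, then for every $N\ge n$ one has $\mathfrak b\cap\ma^N=\ma^{N-n}(\mathfrak b\cap\ma^n)\subseteq\m\ma^N$, which is exactly how the paper passes from one large $n$ to all of them.
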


    Equivalently, the conclusion of this theorem says that $R/\mathfrak b$ is a Serre lift of the finite length $S$-module $S/\mathfrak bS$ where $S = R/\ma$. We give a proof of this theorem below. First we recall what it means for an ideal to be equimultiple, and then prove a lemma.
\begin{chunk}
    Let $R$ be a noetherian local ring and $\ma \subseteq R $ an ideal. The \textit{analytic spread} of $\ma$, denoted $\ell(\ma)$, measures the polynomial rate in which the $\mu(\ma^n)$ grows for $n \geq 1$, i.e. we have $\mu(\ma^n) \approx n^{\ell(\ma)-1}$, see \cite[Definition 4.6.7]{Bruns/Herzog:1998}. It is always true that
    \[\height{\ma} \leq \ell(\ma) \leq \dim{R},\]
    see \cite[Exercise 4.6.13.c]{Bruns/Herzog:1998}, and when the first inequality is an equality, we say $\ma$ is an \textit{equimultiple} ideal.

    Given an ideal $\mathfrak b \subseteq R$, by the Artin-Rees lemma, 
    \[ \text{ar}_{\ma}(\mathfrak b) =\inf\{s \colon \text{for } n \geq s, \mathfrak b \cap \ma^{n} = \ma^{n-s}(\mathfrak b \cap \ma^s)\}\] is a finite integer. Therefore, $\mathfrak b \cap \ma^n \subseteq \m \ma^n$ for some $n \geq \text{ar}_{\ma}(\mathfrak b)$  is equivalent to the condition $\mathfrak b \cap \ma^n \subseteq \m \ma^n$ for all $n \gg 0$. 
\end{chunk}

\begin{lemma} \label{T_XiaolingTheorem}
    Let $(R, \m)$ be a noetherian local ring, let $\ma \subseteq R$ be an ideal such that $R$ is $\ma$-adically complete, and consider the surjection $\varphi\colon R \twoheadrightarrow S =R/\ma$. Suppose that $\mathfrak b \subseteq R$ is an ideal such that $ \sqrt{\ma  + \mathfrak b} = \m$ and that the containment 
    \[ 
    \mathfrak b \cap \ma^n \subseteq \m \ma^n
    \] 
    holds for some $n$ sufficiently large. Setting $I_n = \mathfrak b + \ma^n$ we have that $\{R/I_n\}_{n \geq 1}$ is a lifting system for the $S$-module $S/\mathfrak b S$ along $\varphi$ such that $\ell_R(R/I_n) \approx n^d$ for $d \geq  \ell(\ma)$. In particular, $\dim R/\mathfrak b \geq \ell(\ma)$.
\end{lemma}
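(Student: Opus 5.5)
The plan is to verify the lifting-system axioms directly, then get the growth rate $n^{\dim R/\mathfrak b}$ from \cref{lemma} and bound it from below by summing minimal numbers of generators of $\ma^k$.

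\emph{Lifting system.} First check that $\{R/I_n\}_{n\geq 1}$ is a lifting system for $S/\mathfrak bS$ along $\varphi$.
\begin{itemize}
\item $R/I_1 = R/(\mathfrak b+\ma) \cong S/\mathfrak bS$.
\item $\ma^n \subseteq I_n = \ann_R(R/I_n)$.
\item Since $\ma^{n+1}\subseteq \ma^n$,
\[ (R/I_{n+1})/\ma^n(R/I_{n+1}) \cong R/(\mathfrak b + \ma^{n+1}+\ma^n) = R/I_n .\]
\item Each $R/I_n$ has finite length because $\sqrt{\mathfrak b+\ma^n} = \sqrt{\mathfrak b+\ma}=\m$.
\end{itemize}
By \cref{Fittingideals}, $\Fitt_0^R(R/I_n)=I_n$. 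So \cref{lemma}, applied with $\mathfrak c=\ma$, gives $\ell_R(R/I_n)\approx n^{d}$ with $d=\dim R/\mathfrak b$. It remains to show $d\geq \ell(\ma)$, which is exactly the ``in particular''. Alternatively, this is the statement of \cref{theTheorem}(2) applied to this system.

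\emph{Lower bound on $\ell_R(R/I_n)$.} The main step is to bound $\ell_R(R/I_n)$ from below by a polynomial of degree $\ell(\ma)$. By the discussion preceding the lemma (Artin--Rees), the containment $\mathfrak b\cap\ma^k\subseteq \m\ma^k$ holds for all $k\geq k_0$ for some $k_0$. For such $k$, consider the chain $I_{k+1}\subseteq \mathfrak b+\m\ma^k\subseteq I_k$. The first inclusion holds since $\ma^{k+1}\subseteq \m\ma^k$. Then
\[ \frac{I_k}{\mathfrak b+\m\ma^k} = \frac{\mathfrak b + \ma^k}{\mathfrak b+\m\ma^k}\cong \frac{\ma^k}{\ma^k\cap(\mathfrak b+\m\ma^k)} = \frac{\ma^k}{\m\ma^k + (\mathfrak b\cap \ma^k)} = \frac{\ma^k}{\m\ma^k}, \]
where the modular law is used because $\m\ma^k\subseteq\ma^k$, and the last equality uses the hypothesis. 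Hence
\[ \ell_R(I_k/I_{k+1}) \geq \mu_R(\ma^k) \quad\text{for all } k\geq k_0 .\]

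\emph{Conclusion.} Summing over $k_0\leq k\leq n-1$ gives
\[ \ell_R(R/I_n)\geq \sum_{k=k_0}^{n-1}\mu_R(\ma^k). \]
Since $\mu_R(\ma^k)\approx k^{\ell(\ma)-1}$, this sum is bounded below by a polynomial of degree $\ell(\ma)$ for $n\gg0$. Combined with the upper bound $\ell_R(R/I_n)\leq g(n)$ with $\deg g = d$ from the first step, this forces $d\geq \ell(\ma)$, that is, $\dim R/\mathfrak b\geq \ell(\ma)$.

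The main obstacle is the isomorphism $I_k/(\mathfrak b+\m\ma^k)\cong \ma^k/\m\ma^k$, which is where the hypothesis enters. Its use requires passing from ``some $n$ sufficiently large'' to ``all $k\gg0$'', which the Artin--Rees remark in the paper provides.
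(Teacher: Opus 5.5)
Your proof is correct and follows the paper's argument. Your intermediate ideal $\mathfrak b+\m\ma^k$ is exactly $\m I_k+I_{k+1}$, so your isomorphism $I_k/(\mathfrak b+\m\ma^k)\cong\ma^k/\m\ma^k$ is the paper's computation $\mu_R(I_k/I_{k+1})=\mu(\ma^k)$, followed by the same summation against $\mu(\ma^k)\approx k^{\ell(\ma)-1}$. The only difference is cosmetic: you get the growth degree $\dim R/\mathfrak b$ by applying \cref{lemma} directly, rather than through \cref{theTheorem}(2) and the lift $\varprojlim R/I_n=R/\mathfrak b$, and this shows the dimension inequality does not need the $\ma$-adic completeness hypothesis.
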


\begin{proof}
    It follows from the definition that the collection $\{ R/I_n\}_{n \geq 1}$ is a lifting system for the $S$-module $S/\mathfrak b S$. We now coarsely estimate $\ell_R(I_n/I_{n+1})$ for $n \geq 1$ from below by computing its minimal number of generators. We claim that the following isomorphisms hold for each $n \geq 1$. 
    \[\frac{I_n/I_{n+1}}{\m (I_n/I_{n+1})} = \frac{I_n}{\m I_n + I_{n+1}} \cong \frac{\ma^n}{\m \ma^n + (\mathfrak{b} \cap\ma^n)}. \]
    The equality is clear. To see the isomorphism, notice that since $\m \mathfrak{b} \subseteq \mathfrak{b}$ and since $\ma^{n+1} \subseteq \m \ma^n$, we have
    \[ \frac{I_n}{\m I_n + I_{n+1}} = \frac{\mathfrak{b} + \ma^n}{\m(\mathfrak{b} + \ma^n) + \mathfrak{b} + \ma^{n+1}} = \frac{\mathfrak{b} + \ma^n}{\mathfrak{b} + \m \ma^n }.\]
    Also, the map 
    \[ \ma^n \twoheadrightarrow \frac{\mathfrak{b} + \ma^n}{\mathfrak{b} + \m \ma^n}\]
    defined by $r \mapsto r +  \mathfrak{b} +\m \ma^n$ is surjective, and the kernel of this map is 
    \[\ma^n \cap (\mathfrak{b} + \m \ma^n) = (\ma^n \cap \mathfrak{b}) + \m \ma^n\] 
    since $\m \ma^n \subseteq \ma^n$; this last equality follows from the modular law, see \cite[Page 6]{Atiyah/Macdonald:1969}. This proves the claim.
    
    Now from the claim and the following exact sequence
    \[0 \to \frac{\m \ma^n + (\mathfrak{b} \cap \ma^n)}{\m \ma^n} \to \frac{\ma^n}{\m \ma^n} \twoheadrightarrow \frac{\ma^n}{\m \ma^n + (\mathfrak{b} \cap \ma^n)} \to 0,\]
    it follows that 
    \[\mu_R(I_n/I_{n+1}) = \mu(\ma^n) - \dim_k\frac{\m \ma^n +(\mathfrak{b} \cap \ma^n)}{\m \ma^n}.\] 
    Since the containment $\mathfrak{b} \cap \ma^{n} \subseteq \m \ma^{n}$ holds for $n$ large enough, we have 
    \[\mu_R(I_n/I_{n+1}) = \mu(\ma^n). \]
    From the following exact sequence 
    \[ 0 \to I_n/I_{n+1} \to R/I_{n+1} \to R/I_n \to 0\]
    we see that
    \[\ell_R(R/I_{n+1}) - \ell_R(R/I_n) = \ell_R(I_n/I_{n+1}) \geq \mu_R(I_n/I_{n+1}) = \mu(\ma^n) \approx n^{\ell(\ma)-1}.\]
    This tells us $\ell_R(R/I_n)  \geq n^{\ell(\ma)}$ for $n$ large enough. Therefore, $\ell_R(R/I_n) \approx n^d$ with $d \geq \ell(\ma)$. The associated lift of the system is $R/\mathfrak b$ since $R$ is $\ma$-adically complete, and so, by \cref{theTheorem}(2), $\dim R/\mathfrak b \geq \ell(\ma)$. Just recall that the zeroth Fitting ideal of $R/I_{n}$ is $I_{n}$.
\end{proof}

    \begin{proof}[Proof of \cref{T_IntersectProperlyEquimultiple}]
    For any $n \geq 1$, the condition that $\mathfrak b$ does not contain any minimal generator of $\ma^n$ is equivalent to the inclusion $\mathfrak b \cap \ma^n \subseteq \m \ma^n$. Now just note that we have
    \[\dim{R/\mathfrak b} \geq \ell(\ma) = \height \ma = \dim{R} -\dim{R/\ma} \geq \dim{R/\mathfrak b}\]
    where the first inequality is by \Cref{T_XiaolingTheorem}, the first equality is by $\ma$ being an equimultiple ideal, and the second inequality is by Serre's dimension inequality.
    \end{proof}

\begin{appendix}

\section{Proof of \cref{liftingprop}} \label{A_ProofLiftingSystemProp}
In this section we prove \cref{liftingprop}. The proof of the second part of this proposition is an abstraction of the proof of \cite[Theorem 1.2]{Auslander/Ding/Solberg:1993}. First we recall the Mittag-Leffler condition, see \cite[Definition 3.5.9]{Christensen/Foxby/Holm:2024}.
\begin{chunk} An inverse system of modules $\{\lambda_{n+1}\colon M_{n+1}\rightarrow M_{n}\}_{n\geq 1}$ is said to satisfy the Mittag-Leffler condition if for every $n\geq 1$ the descending chain
\[\text{im}(\lambda_{n+1})\supseteq \text{im}(\lambda_{n+1}\lambda_{n+2}) \supseteq \text{im}(\lambda_{n+1}\lambda_{n+2}\lambda_{n+3})\supseteq \dots\]
of submodules of $M_{n}$ stabilizes. Clearly, systems $\{\lambda_{n+1}\colon M_{n+1}\rightarrow M_{n}\}_{n\geq 1}$ consisting of artinian modules and systems such that $\lambda_{n+1}$ is surjective for all $n$ satisfy the Mittag-Leffler condition.
\end{chunk}

\begin{proof}[Proof of \cref{liftingprop}]
Below we give a proof for the case $n = 1$. But for each $n \geq 1$, running the same proof for $\ma^n$ shows that the module $L_n = \varprojlim_{i \geq n} M_i$ satisfies $L_n \otimes_R R/\ma^n \cong M_n$. However, it is clear that $L \cong L_n$ for each $n \geq 1$, so we have the desired conclusion.
    
Let $\ma =  (a_1, \ldots, a_{\mu(\ma)})$. Consider the map 
\[\alpha_{n}\colon M_n^{\oplus \mu(\ma)} \xrightarrow{[a_1 \cdots a_{\mu(\ma)}]}M_n\]
for all $n \geq 1$, and let $K_{n} = \ker(\alpha_{n})$. Note that $\coker(\alpha_{n}) = M_{n}/\ma M_n \cong M$ for all $n \geq 1$. For (1), consider the following commutative diagram with exact rows

\[\begin{tikzcd}
	& \vdots & \vdots & \vdots & \vdots \\
	0 & {K_{n+1}} & {M_{n+1}^{\oplus\mu(\ma)}} & {M_{n+1}} & M & 0 \\
	0 & {K_{n}} & {M_{n}^{\oplus\mu(\ma)}} & {M_{n}} & M & 0 \\
	& \vdots & \vdots & \vdots & \vdots
	\arrow[from=1-2, to=2-2]
	\arrow[from=1-3, to=2-3]
	\arrow[from=1-4, to=2-4]
	\arrow[from=1-5, to=2-5]
	\arrow[from=2-1, to=2-2]
	\arrow[from=2-2, to=2-3]
	\arrow[from=2-2, to=3-2]
	\arrow["{\alpha_{n+1}}", from=2-3, to=2-4]
	\arrow[from=2-3, to=3-3]
	\arrow[from=2-4, to=2-5]
	\arrow[from=2-4, to=3-4]
	\arrow[from=2-5, to=2-6]
	\arrow[from=2-5, to=3-5]
	\arrow[from=3-1, to=3-2]
	\arrow[from=3-2, to=3-3]
	\arrow[from=3-2, to=4-2]
	\arrow["{\alpha_{n}}", from=3-3, to=3-4]
	\arrow[from=3-3, to=4-3]
	\arrow[from=3-4, to=3-5]
	\arrow[from=3-4, to=4-4]
	\arrow[from=3-5, to=3-6]
	\arrow[from=3-5, to=4-5]
\end{tikzcd}\]
This diagram gives rise to two more diagrams
\[\begin{tikzcd}
	& \vdots & \vdots & \vdots \\
	0 & {K_{n+1}} & {M_{n+1}^{\oplus\mu(\ma)}} & {\ma M_{n+1}} & 0 \\
	0 & {K_{n}} & {M_{n}^{\oplus\mu(\ma)}} & {\ma M_{n}} & 0 \\
	& \vdots & \vdots & \vdots
	\arrow[from=1-2, to=2-2]
	\arrow[from=1-3, to=2-3]
	\arrow[from=1-4, to=2-4]
	\arrow[from=2-1, to=2-2]
	\arrow[from=2-2, to=2-3]
	\arrow[from=2-2, to=3-2]
	\arrow[from=2-3, to=2-4]
	\arrow[from=2-3, to=3-3]
	\arrow[from=2-4, to=2-5]
	\arrow[from=2-4, to=3-4]
	\arrow[from=3-1, to=3-2]
	\arrow[from=3-2, to=3-3]
	\arrow[from=3-2, to=4-2]
	\arrow[from=3-3, to=3-4]
	\arrow[from=3-3, to=4-3]
	\arrow[from=3-4, to=3-5]
	\arrow[from=3-4, to=4-4]
\end{tikzcd}\]
and
\[\begin{tikzcd}
	& \vdots & \vdots & \vdots \\
	0 & {\ma M_{n+1}} & {M_{n+1}} & M & 0 \\
	0 & {\ma M_{n}} & {M_{n}} & M & 0 \\
	& \vdots & \vdots & \vdots
	\arrow[from=1-2, to=2-2]
	\arrow[from=1-3, to=2-3]
	\arrow[from=1-4, to=2-4]
	\arrow[from=2-1, to=2-2]
	\arrow[from=2-2, to=2-3]
	\arrow[from=2-2, to=3-2]
	\arrow[from=2-3, to=2-4]
	\arrow[from=2-3, to=3-3]
	\arrow[from=2-4, to=2-5]
	\arrow[from=2-4, to=3-4]
	\arrow[from=3-1, to=3-2]
	\arrow[from=3-2, to=3-3]
	\arrow[from=3-2, to=4-2]
	\arrow[from=3-3, to=3-4]
	\arrow[from=3-3, to=4-3]
	\arrow[from=3-4, to=3-5]
	\arrow[from=3-4, to=4-4]
\end{tikzcd}\]
The second and third diagrams above consist of systems of artinian modules, and so these systems satisfy the Mittag-Leffler condition. Since these systems satisfy the Mittag-Leffler condition, the sequences 
\[0\rightarrow \varprojlim K_{n} \rightarrow \varprojlim M_{n}^{\oplus\mu(\ma)}\rightarrow \varprojlim \ma M_{n}\rightarrow 0\]
and
\[0\rightarrow\varprojlim\ma M_{n} \rightarrow \varprojlim M_{n}\rightarrow \varprojlim M\rightarrow 0\]
are exact by \cite[Theorem 3.5.19]{Christensen/Foxby/Holm:2024}, which gives a four term exact sequence
\[0\rightarrow \varprojlim K_{n} \rightarrow \varprojlim M_{n}^{\oplus\mu(\ma)}\xrightarrow{f} \varprojlim M_{n}\rightarrow \varprojlim M\rightarrow 0.\]
We now claim that the map $f$ is given by $[a_1 \cdots a_{\mu(\ma)}]$. To see this, note that we have a commutative diagram
\[\begin{tikzcd}
	0 & {\varprojlim M_{n}^{\oplus\mu(\ma)}} & {\prod M_{n}^{\oplus\mu(\ma)}} & {\prod M_{n}^{\oplus\mu(\ma)}} & 0 \\
	0 & {\varprojlim M_{n}} & {\prod M_{n}} & {\prod M_{n}} & 0
	\arrow[from=1-1, to=1-2]
	\arrow[from=1-2, to=1-3]
	\arrow["f", from=1-2, to=2-2]
	\arrow["{\Delta^{\oplus}}", from=1-3, to=1-4]
	\arrow["\alpha", from=1-3, to=2-3]
	\arrow[from=1-4, to=1-5]
	\arrow["\alpha", from=1-4, to=2-4]
	\arrow[from=2-1, to=2-2]
	\arrow[from=2-2, to=2-3]
	\arrow["\Delta", from=2-3, to=2-4]
	\arrow[from=2-4, to=2-5]
\end{tikzcd}\]
by \cite[Lemma 3.5.14]{Christensen/Foxby/Holm:2024}, where the maps $\Delta^{\oplus}$ and $\Delta$ are the ones given by \cite[Proposition 3.5.5]{Christensen/Foxby/Holm:2024} and the map $\alpha$ is the map given by $[a_1 \cdots a_{\mu(\ma)}]$. Since limits commute with finite direct sums, the map given by $[a_1 \cdots a_{\mu(\ma)}]$ makes the square on the left commute. Since the map induced on limits is unique (see \cite[Theorem 3.4.5 and Definition 3.4.10]{Christensen/Foxby/Holm:2024}), we must have that $f$ is equal to the map given by $[a_1 \cdots a_{\mu(\ma)}]$, proving the claim.

The claim implies that the image of $f$ is $\ma\varprojlim M_{n}$, giving the third isomorphism below
\[M \cong \varprojlim M \cong (\varprojlim M_{n}) / (\text{im}f) \cong (\varprojlim M_{n}) / (\ma\varprojlim M_{n}) \cong L/\ma L \cong L\otimes_{R} R/\ma.\]
This finishes the proof of (1).

Let us now prove (2). We first show that $L$ is finitely generated. Here we closely follow the proof of the second part of (b) implies (a) of \cite[Theorem 1.2]{Auslander/Ding/Solberg:1993}. Choose a surjection $\psi\colon R^{\oplus t}\rightarrow M$ and consider the following commutative diagram with exact rows
\[\begin{tikzcd}
	& {R^{\oplus t}} \\
	L & M & 0 \\
	{M_{n}} & M & 0
	\arrow["\gamma"', from=1-2, to=2-1]
	\arrow["\psi", from=1-2, to=2-2]
	\arrow[from=2-1, to=2-2]
	\arrow["{\delta_{n}}", from=2-1, to=3-1]
	\arrow[from=2-2, to=2-3]
	\arrow["{=}", from=2-2, to=3-2]
	\arrow[from=3-1, to=3-2]
	\arrow[from=3-2, to=3-3]
\end{tikzcd}\]
where the existence of the map $\gamma$ follows from the fact that $R^{\oplus t}$ is projective, and the maps $L\rightarrow M$ and $\delta_{n}$ are the natural projections. Tensoring this diagram with $R/\ma$ gives the diagram
\[\begin{tikzcd}
	& {R^{\oplus t}}\otimes_{R}R/\ma \\
	L\otimes_{R}R/\ma & M & 0 \\
	{M_{n}\otimes_{R}R/\ma} & M & 0
	\arrow[""', from=1-2, to=2-1]
	\arrow["", from=1-2, to=2-2]
	\arrow[from=2-1, to=2-2]
	\arrow["", from=2-1, to=3-1]
	\arrow[from=2-2, to=2-3]
	\arrow["", from=2-2, to=3-2]
	\arrow[from=3-1, to=3-2]
	\arrow[from=3-2, to=3-3]
\end{tikzcd}\]
Let $C$ be the cokernel of $\delta_{n}\gamma$. Since $M_{n}\otimes_{R}R/\ma=M$ and since the diagram commutes, we have 
\[\delta_{n}\gamma\otimes_{R}R/\ma = \psi\otimes_{R}R/\ma,\]
and so the cokernel of $\delta_{n}\gamma\otimes_{R}R/\ma$ is zero. Since $C/\ma C$ is the cokernel of $\delta_{n}\gamma\otimes_{R}R/\ma$, $C/\ma C$ is zero, and so by Nakayama's lemma $C$ is zero. This implies that the composition $\varphi_{n}:= \delta_{n}\gamma$ is surjective, and so the induced map $\overline{\varphi}_{n}\colon R^{\oplus t}/\ma^{n} R^{\oplus t}\rightarrow M_{n}$ is surjective. From this we get a commutative diagram with exact rows and columns
\[\begin{tikzcd}
	&& 0 & 0 \\
	&& {\ma^{n}R^{\oplus t}/\ma^{n+1}R^{\oplus t}} & {\ma^{n}M_{n+1}} \\
	0 & {\ker{\overline{\varphi}_{n+1}}} & {R^{\oplus t}/\ma^{n+1} R^{\oplus t}} & {M_{n+1}} & 0 \\
	0 & {\ker{\overline{\varphi}_{n}}} & {R^{\oplus t}/\ma^{n} R^{\oplus t}} & {M_{n}} & 0 \\
	&& 0 & 0
	\arrow[from=1-3, to=2-3]
	\arrow[from=1-4, to=2-4]
	\arrow["{\beta_{n+1}}", from=2-3, to=2-4]
	\arrow[from=2-3, to=3-3]
	\arrow[from=2-4, to=3-4]
	\arrow[from=3-1, to=3-2]
	\arrow[from=3-2, to=3-3]
	\arrow["{f_{n}}"', from=3-2, to=4-2]
	\arrow["{\overline{\varphi}_{n+1}}", from=3-3, to=3-4]
	\arrow[from=3-3, to=4-3]
	\arrow[from=3-4, to=3-5]
	\arrow[from=3-4, to=4-4]
	\arrow[from=4-1, to=4-2]
	\arrow[from=4-2, to=4-3]
	\arrow["{\overline{\varphi}_{n}}", from=4-3, to=4-4]
	\arrow[from=4-3, to=5-3]
	\arrow[from=4-4, to=4-5]
	\arrow[from=4-4, to=5-4]
\end{tikzcd}\]
Since $\overline{\varphi}_{n+1}$ is surjective, so is $\beta_{n+1}$, and so the sequence
\[0\rightarrow \ker{\beta_{n+1}}\rightarrow \ker{\overline{\varphi}_{n+1}}\xrightarrow{f_{n}} \ker{\overline{\varphi}_{n}}\rightarrow 0\]
is exact by the snake lemma. We now have a surjective system
\[\begin{tikzcd}
	& \vdots & \vdots & \vdots \\
	0 & {\ker{\overline{\varphi}_{n+1}}} & {R^{\oplus t}/\ma^{n+1} R^{\oplus t}} & {M_{n+1}} & 0 \\
	0 & {\ker{\overline{\varphi}_{n}}} & {R^{\oplus t}/\ma^{n} R^{\oplus t}} & {M_{n}} & 0 \\
	& \vdots & \vdots & \vdots
	\arrow[from=1-2, to=2-2]
	\arrow[from=1-3, to=2-3]
	\arrow[from=1-4, to=2-4]
	\arrow[from=2-1, to=2-2]
	\arrow[from=2-2, to=2-3]
	\arrow[from=2-2, to=3-2]
	\arrow[from=2-3, to=2-4]
	\arrow[from=2-3, to=3-3]
	\arrow[from=2-4, to=2-5]
	\arrow[from=2-4, to=3-4]
	\arrow[from=3-1, to=3-2]
	\arrow[from=3-2, to=3-3]
	\arrow[from=3-2, to=4-2]
	\arrow[from=3-3, to=3-4]
	\arrow[from=3-3, to=4-3]
	\arrow[from=3-4, to=3-5]
	\arrow[from=3-4, to=4-4]
\end{tikzcd}\]
and this gives the following exact sequence by \cite[Theorem 3.5.19]{Christensen/Foxby/Holm:2024} since surjective systems satisfy the Mittag-Leffler condition
\[0\rightarrow \varprojlim \ker{\overline{\varphi}_{n}}\rightarrow \varprojlim R^{\oplus t}/\ma^{n} R^{\oplus t}\rightarrow \varprojlim M_n\rightarrow 0.\]
Since $R$ is $\ma$-adically complete and since $R^{\oplus t}$ is a finitely generated $R$-module, $R^{\oplus t}$ and $\varprojlim R^{\oplus t}/\ma^{n} R^{\oplus t}$ are isomorphic. Since $L=\varprojlim M_{n}$ by definition, $L$ is a quotient of a finitely generated module. This proves that $L$ is finitely generated.

To see the equality $\mu_R(L) = \mu_R(M)$, note that since $L\otimes_{R}S$ is isomorphic to $M$ by the first part of the proposition, we have $\mu_R(L) \geq \mu_R(M)$. Also, in the argument above we can choose $t$ to be $\mu_{R}(M)$, which implies $\mu_R(L) \leq \mu_R(M)$ since $L$ is a quotient of $R^{\oplus t}$. 

Finally, to see why the equality
\[ \ann_R(L) = \bigcap_{i \geq 1}\ann_R(M_n)\]
holds, note that $L$ is a submodule of $\prod M_{n}$, and so $\bigcap\ann_R(M_n)$ is contained in $\ann_R(L)$. For the other containment, suppose $r$ is in $\ann_{R}(L)$ and let $x$ be in $M_{n}$ for some $n$. Choose a tuple $(\dots,x_{2},x_{1})$ in $L$ such that $x_{n}=x$. Such a tuple exists since we started with a surjective system, and this tuple must be annihilated by $r$, which implies $rx_{i}=0$ for all $i$, and so $r$ is in $\ann_{R}(M_{n})$. This finishes the proof of (2), which completes the proof of the proposition.
\end{proof}

\end{appendix}

\section*{Acknowledgments}
We thank Lars Christensen, Eloísa Grifo, Jack Jeffries, Tom Marley, and Mark Walker for the many helpful conversations throughout this project. We also thank Xiaoling He for the key combinatorial ideas that grew into \Cref{T_XiaolingTheorem}. 

\bibliographystyle{amsplain}
\bibliography{references}

@article {Auslander:1961,
    AUTHOR = {Auslander, M.},
     TITLE = {Modules over unramified regular local rings},
   JOURNAL = {Illinois J. Math.},
  FJOURNAL = {Illinois Journal of Mathematics},
    VOLUME = {5},
      YEAR = {1961},
     PAGES = {631--647},
      ISSN = {0019-2082},
   MRCLASS = {13.40 (13.95)},
  MRNUMBER = {179211},
MRREVIEWER = {A.\ Brumer},
       URL = {http://projecteuclid.org/euclid.ijm/1255631585},
}

@phdthesis{KC;2025,
    author = {KC, Nawaj},
    title = {Modules of finite projective dimension and singularities},
    school = {University of Nebraska--Lincoln},
    year = {2025}
}

@InProceedings{Hochster:1981,
author="Hochster, Melvin",
editor="Libgober, Anatoly
and Wagreich, Philip",
title="The dimension of an intersection in an ambient hypersurface",
booktitle="Algebraic Geometry",
year="1981",
publisher="Springer Berlin Heidelberg",
address="Berlin, Heidelberg",
pages="93--106",
isbn="978-3-540-38720-6"
}

@article {RobertsNIT:1987,
    AUTHOR = {Roberts, Paul},
     TITLE = {Le th\'{e}or\`eme d'intersection},
   JOURNAL = {C. R. Acad. Sci. Paris S\'{e}r. I Math.},
  FJOURNAL = {Comptes Rendus des S\'{e}ances de l'Acad\'{e}mie des Sciences.
              S\'{e}rie I. Math\'{e}matique},
    VOLUME = {304},
      YEAR = {1987},
    NUMBER = {7},
     PAGES = {177--180},
      ISSN = {0249-6291},
   MRCLASS = {14C17 (13D25)},
  MRNUMBER = {880574},
MRREVIEWER = {R\"{u}diger\ Achilles},
}

@article {Jorgensen:2003,
    AUTHOR = {Jorgensen, David A.},
     TITLE = {Some liftable cyclic modules},
   JOURNAL = {Comm. Algebra},
  FJOURNAL = {Communications in Algebra},
    VOLUME = {31},
      YEAR = {2003},
    NUMBER = {1},
     PAGES = {493--504},
      ISSN = {0092-7872,1532-4125},
   MRCLASS = {13C13 (13B30)},
  MRNUMBER = {1969236},
MRREVIEWER = {Adela\ N.\ Vraciu},
       DOI = {10.1081/AGB-120016772},
       URL = {https://doi.org/10.1081/AGB-120016772},
}

@incollection {Roberts:1989,
    AUTHOR = {Roberts, Paul},
     TITLE = {Intersection theorems},
 BOOKTITLE = {Commutative algebra ({B}erkeley, {CA}, 1987)},
    SERIES = {Math. Sci. Res. Inst. Publ.},
    VOLUME = {15},
     PAGES = {417--436},
 PUBLISHER = {Springer, New York},
      YEAR = {1989},
      ISBN = {0-387-96990-X},
   MRCLASS = {13H15 (14C17)},
  MRNUMBER = {1015532},
MRREVIEWER = {J.\ K.\ Verma},
       DOI = {10.1007/978-1-4612-3660-3\_23},
       URL = {https://doi.org/10.1007/978-1-4612-3660-3_23},
}

@book{Serre,
    author = {Serre, Jean-Pierre} ,
    title = {Local Algebra},
    publisher = {Springer Berlin, Heidelberg},
    year = {1965},
}

@article {Peskine/Szpiro:1973,
    AUTHOR = {Peskine, C. and Szpiro, L.},
     TITLE = {Dimension projective finie et cohomologie locale.
              {A}pplications \`a la d\'{e}monstration de conjectures de {M}.
              {A}uslander, {H}. {B}ass et {A}. {G}rothendieck},
   JOURNAL = {Inst. Hautes \'{E}tudes Sci. Publ. Math.},
  FJOURNAL = {Institut des Hautes \'{E}tudes Scientifiques. Publications
              Math\'{e}matiques},
    NUMBER = {42},
      YEAR = {1973},
     PAGES = {47--119},
      ISSN = {0073-8301,1618-1913},
   MRCLASS = {14B15 (13D05 13H10)},
  MRNUMBER = {374130},
MRREVIEWER = {Melvin\ Hochster},
       URL = {http://www.numdam.org/item?id=PMIHES_1973__42__47_0},
}

@book {Bruns/Herzog:1998,
    AUTHOR = {Bruns, Winfried and Herzog, J{\"u}rgen},
     TITLE = {{C}ohen-{M}acaulay rings},
    SERIES = {Cambridge Studies in Advanced Mathematics},
    VOLUME = {39},
   EDITION = {Revised},
 PUBLISHER = {Cambridge University Press, Cambridge},
      YEAR = {1998},
     PAGES = {xiv+453},
      ISBN = {0-521-56674-6},
   MRCLASS = {13H10 (13-02)},
  MRNUMBER = {1251956},
MRREVIEWER = {Matthew Miller},
}

@article {Jorgensen:1999,
    AUTHOR = {Jorgensen, David A.},
     TITLE = {Existence of unliftable modules},
   JOURNAL = {Proc. Amer. Math. Soc.},
  FJOURNAL = {Proceedings of the American Mathematical Society},
    VOLUME = {127},
      YEAR = {1999},
    NUMBER = {6},
     PAGES = {1575--1582},
      ISSN = {0002-9939,1088-6826},
   MRCLASS = {13D25 (13C13)},
  MRNUMBER = {1476141},
MRREVIEWER = {Ana\ Jerem\'{\i}as L\'{o}pez},
       DOI = {10.1090/S0002-9939-99-04679-1},
       URL = {https://doi.org/10.1090/S0002-9939-99-04679-1},
}

@book {Eisenbud:1995,
    AUTHOR = {Eisenbud, David},
     TITLE = {Commutative algebra},
    SERIES = {Graduate Texts in Mathematics},
    VOLUME = {150},
      NOTE = {With a view toward algebraic geometry},
 PUBLISHER = {Springer-Verlag, New York},
      YEAR = {1995},
     PAGES = {xvi+785},
      ISBN = {0-387-94268-8; 0-387-94269-6},
   MRCLASS = {13-01 (14A05)},
  MRNUMBER = {1322960},
MRREVIEWER = {Matthew\ Miller},
       DOI = {10.1007/978-1-4612-5350-1},
       URL = {https://doi.org/10.1007/978-1-4612-5350-1},
}

@article {KC/SotoLevins:2024,
    AUTHOR = {KC, Nawaj and Soto Levins, Andrew J.},
     TITLE = {On liftings of modules of finite projective dimension},
   JOURNAL = {Int. Math. Res. Not. IMRN},
  FJOURNAL = {International Mathematics Research Notices. IMRN},
      YEAR = {2024},
    NUMBER = {24},
     PAGES = {14729--14736},
      ISSN = {1073-7928,1687-0247},
   MRCLASS = {13C15 (13H05)},
  MRNUMBER = {4843654},
       DOI = {10.1093/imrn/rnae257},
       URL = {https://doi.org/10.1093/imrn/rnae257},
}

@incollection {Buchsbaum/Eisenbud:1972,
    AUTHOR = {Buchsbaum, David A. and Eisenbud, David},
     TITLE = {Lifting modules and a theorem on finite free resolutions},
 BOOKTITLE = {Ring theory ({P}roc. {C}onf., {P}ark {C}ity, {U}tah, 1971)},
     PAGES = {63--74},
 PUBLISHER = {Academic Press, New York-London},
      YEAR = {1972},
   MRCLASS = {16A64},
  MRNUMBER = {340343},
MRREVIEWER = {J.\ L.\ Dawson},
}

@article {Hochster:1975,
    AUTHOR = {Hochster, Melvin},
     TITLE = {An obstruction to lifting cyclic modules},
   JOURNAL = {Pacific J. Math.},
  FJOURNAL = {Pacific Journal of Mathematics},
    VOLUME = {61},
      YEAR = {1975},
    NUMBER = {2},
     PAGES = {457--463},
      ISSN = {0030-8730,1945-5844},
   MRCLASS = {13C10},
  MRNUMBER = {412169},
MRREVIEWER = {Chr.\ U.\ Jensen},
       URL = {http://projecteuclid.org/euclid.pjm/1102868039},
}

@article {Lichtenbaum:1966,
    AUTHOR = {Lichtenbaum, Stephen},
     TITLE = {On the vanishing of {${\rm Tor}$} in regular local rings},
   JOURNAL = {Illinois J. Math.},
  FJOURNAL = {Illinois Journal of Mathematics},
    VOLUME = {10},
      YEAR = {1966},
     PAGES = {220--226},
      ISSN = {0019-2082},
   MRCLASS = {13.95 (18.20)},
  MRNUMBER = {188249},
MRREVIEWER = {M.\ Auslander},
       URL = {http://projecteuclid.org/euclid.ijm/1256055103},
}

@article {Auslander/Ding/Solberg:1993,
    AUTHOR = {Auslander, Maurice and Ding, Songqing and Solberg, {\O}yvind},
     TITLE = {Liftings and weak liftings of modules},
   JOURNAL = {J. Algebra},
  FJOURNAL = {Journal of Algebra},
    VOLUME = {156},
      YEAR = {1993},
    NUMBER = {2},
     PAGES = {273--317},
      ISSN = {0021-8693},
   MRCLASS = {16D80},
  MRNUMBER = {1216471},
MRREVIEWER = {Dinh Van Huynh},
       DOI = {10.1006/jabr.1993.1076},
       URL = {https://doi.org/10.1006/jabr.1993.1076},
}

@article {Dao:2007,
    AUTHOR = {Dao, Hailong},
     TITLE = {On liftable and weakly liftable modules},
   JOURNAL = {J. Algebra},
  FJOURNAL = {Journal of Algebra},
    VOLUME = {318},
      YEAR = {2007},
    NUMBER = {2},
     PAGES = {723--736},
      ISSN = {0021-8693},
   MRCLASS = {13C05},
  MRNUMBER = {2371969},
MRREVIEWER = {Nguy\cftil{e}n Vi\cfudot{e}t D\~{u}ng},
       DOI = {10.1016/j.jalgebra.2007.09.019},
       URL = {https://doi.org/10.1016/j.jalgebra.2007.09.019},
}

@book {Weibel:1994,
    AUTHOR = {Weibel, Charles A.},
     TITLE = {An introduction to homological algebra},
    SERIES = {Cambridge Studies in Advanced Mathematics},
    VOLUME = {38},
 PUBLISHER = {Cambridge University Press, Cambridge},
      YEAR = {1994},
     PAGES = {xiv+450},
      ISBN = {0-521-43500-5; 0-521-55987-1},
   MRCLASS = {18-01 (16-01 17-01 20-01 55Uxx)},
  MRNUMBER = {1269324},
MRREVIEWER = {Kenneth\ A.\ Brown},
       DOI = {10.1017/CBO9781139644136},
       URL = {https://doi.org/10.1017/CBO9781139644136},
}

@book {Atiyah/Macdonald:1969,
    AUTHOR = {Atiyah, M. F. and Macdonald, I. G.},
     TITLE = {Introduction to commutative algebra},
 PUBLISHER = {Addison-Wesley Publishing Co., Reading, Mass.-London-Don
              Mills, Ont.},
      YEAR = {1969},
     PAGES = {ix+128},
   MRCLASS = {13.00},
  MRNUMBER = {242802},
MRREVIEWER = {Johnny\ A.\ Johnson},
}

@book {Christensen/Foxby/Holm:2024,
    AUTHOR = {Christensen, Lars Winther and Foxby, Hans-Bj{\o}rn  and Holm, Henrik},
     TITLE = {Derived Category Methods in Commutative Algebra},
    SERIES = {Springer Monographs in Mathematics},
 PUBLISHER = {Springer Cham},
      YEAR = {2024},
     PAGES = {vii+1119},
      ISBN = {978-3-031-77452-2},
}

@book {Matsumura:1989,
    AUTHOR = {Matsumura, Hideyuki},
     TITLE = {Commutative ring theory},
    SERIES = {Cambridge Studies in Advanced Mathematics},
    VOLUME = {8},
   EDITION = {Second},
      NOTE = {Translated from the Japanese by M. Reid},
 PUBLISHER = {Cambridge University Press, Cambridge},
      YEAR = {1989},
     PAGES = {xiv+320},
      ISBN = {0-521-36764-6},
   MRCLASS = {13-01},
  MRNUMBER = {1011461},
}

@article {Avramov/Foxby:1998,
    AUTHOR = {Avramov, Luchezar L. and Foxby, Hans-Bj{\o}rn},
     TITLE = {Cohen-{M}acaulay properties of ring homomorphisms},
   JOURNAL = {Adv. Math.},
  FJOURNAL = {Advances in Mathematics},
    VOLUME = {133},
      YEAR = {1998},
    NUMBER = {1},
     PAGES = {54--95},
      ISSN = {0001-8708,1090-2082},
   MRCLASS = {13H10 (13B10 14E40)},
  MRNUMBER = {1492786},
MRREVIEWER = {Rafael\ H.\ Villarreal},
       DOI = {10.1006/aima.1997.1684},
       URL = {https://doi.org/10.1006/aima.1997.1684},
}

@book {Hochster:1975b,
    AUTHOR = {Hochster, Melvin},
     TITLE = {Topics in the homological theory of modules over commutative
              rings},
    SERIES = {Conference Board of the Mathematical Sciences Regional
              Conference Series in Mathematics},
    VOLUME = {No. 24},
      NOTE = {Expository lectures from the CBMS Regional Conference held at
              the University of Nebraska, Lincoln, Neb., June 24--28, 1974},
 PUBLISHER = {Conference Board of the Mathematical Sciences, Washington, DC;
              by American Mathematical Society, Providence, RI},
      YEAR = {1975},
     PAGES = {vii+75},
   MRCLASS = {13DXX (14BXX)},
  MRNUMBER = {371879},
MRREVIEWER = {Tadayuki\ Matsuoka},
}

@book {Rotman:2009,
    AUTHOR = {Rotman, Joseph J.},
     TITLE = {An introduction to homological algebra},
    SERIES = {Universitext},
   EDITION = {Second},
 PUBLISHER = {Springer, New York},
      YEAR = {2009},
     PAGES = {xiv+709},
      ISBN = {978-0-387-24527-0},
   MRCLASS = {18Gxx (13Dxx 16Exx 18-01 20J06)},
  MRNUMBER = {2455920},
MRREVIEWER = {Fernando\ Muro},
       DOI = {10.1007/b98977},
       URL = {https://doi.org/10.1007/b98977},
}
\end{document}